\documentclass[a4paper,11pt]{amsart}
\usepackage{a4wide}
\usepackage{enumerate}
\usepackage[utf8]{inputenc}
\usepackage{url}
\usepackage{pst-node}
\usepackage{booktabs}          
\usepackage{float}             
\usepackage{hyperref}
\IfFileExists{\jobname.TEX}{}{\usepackage{srcltx}}
\usepackage{array}
\usepackage{comment}

\numberwithin{equation}{section}

\newcommand{\field}[1]{\mathbb{{#1}}}

\newcommand{\Q}{\field{{Q}}}
\newcommand{\K}{\field{{K}}}

\newcommand{\disc}{\Delta_\K}
\newcommand{\rdisc}{\delta_\K}
\newcommand{\nK}{n_\K}

\def\lamb{w}


\setlength{\medmuskip}{0mu}



\newtheorem{theorem}{Theorem}[section]
\newtheorem{lemma}[theorem]{Lemma}
\newtheorem*{lemma*}{Lemma}

\theoremstyle{remark}
\newtheorem{remark}{Remark}
\newtheorem*{remark*}{Remark}
\newtheorem*{acknowledgements}{Acknowledgements}

\allowdisplaybreaks[4]

\begin{document}
\title[Explicit prime ideal theorem, II]{Explicit versions of the prime ideal theorem for Dedekind zeta functions under GRH, II}

\author[L.~Grenié]{Loïc Grenié}
\address[L.~Grenié]{Dipartimento di Ingegneria Gestionale, dell'Informazione
         e della Produzione\\
         Università degli Studi di Bergamo\\
         viale Marconi 5\\
         24044 Dalmine\\
         Italy}
\email{loic.grenie@gmail.com}

\author[G.~Molteni]{Giuseppe Molteni}
\address[G.~Molteni]{Dipartimento di Matematica\\
         Università di Milano\\
         via Saldini 50\\
         20133 Milano\\
         Italy}
\email{giuseppe.molteni1@unimi.it}

\keywords{} \subjclass[2010]{Primary 11R42, Secondary 11Y70}

\date{\today. File name: {\tt \jobname.tex}}

\begin{abstract}
We have recently proved several explicit versions of the prime ideal
theorem under GRH. Here we further explore the method, in order to deduce its
strongest consequence for the case where $x$ diverges.
\end{abstract}

\maketitle

\section{Introduction}\label{sec:A1}
For a number field $\K$ we denote by
\begin{itemize}
\item[] $\nK$ its dimension,
\item[] $\disc$ the absolute value of its discriminant,
\item[] $\rdisc:=(\disc)^{(1/\nK)}$ its root discriminant,
\item[] $r_1$ the number of its real places,
\item[] $r_2$ the number of its imaginary places,
\item[] $d_\K := r_1+r_2-1$.
\end{itemize}
In \cite{GrenieMolteni2} we use a two step process to prove explicit versions
of the prime ideal theorem under GRH: first we prove a bound for
$|\psi_\K(x)-x|$ depending on a parameter $T$ to be fixed later
(\cite[Theorem~1.1]{GrenieMolteni2}), then we prove several formulas based on
some choices for $T$ (\cite[Corollaries~1.2 and 1.3]{GrenieMolteni2}). A scheme
to produce explicit versions of the prime ideal theorem for number fields has
been proved by Lagarias and Odlyzko in~\cite{LagariasOdlyzko} and recently
Winckler computed the effective constants in~\cite{Winckler}. In this paper, we
reuse Theorem~1.1 of \cite{GrenieMolteni2} with an additional parameter called
$\kappa$ producing the general result in Theorem~\ref{th:B2.5}, we then choose
$\kappa$ and $T$ in such a way as to obtain the best possible asymptotic
expansion for $x\to{+\infty}$. By doing so we obtain a formula that is not too
far from the best possible bound for $|\psi_\K(x)-x|$ which can be proved by
using this method.

We recall that the Lambert-$W$ function is the function such that
$\forall x\geq 0$, $W(x)e^{W(x)}=x$.
\begin{theorem}\label{th:B1.1}
Assume GRH. Let $x\geq 3$,
\begin{align*}
\lamb
 &:=
 W\Big(\frac{e^{\sqrt{5}}}{2\pi}\rdisc\Big[\frac{(\sqrt{5}-1)\pi\sqrt{x}}{2\nK}
          + 21.3270
          + \frac{33.3542}{\nK}
     \Big]\Big),                                                              \\
T
 &:=8.2822+\frac{1}{\lamb}\Big[\frac{(\sqrt{5}-1)\pi\sqrt{x}}{2\nK}
          + 21.3270
          + \frac{33.3542}{\nK}\Big],                                         \\
%
%
\epsilon_\K(x,T)
 &:=\max\Big(0, d_\K\log x - 3.6133\nK\frac{\sqrt{x}}{T}\Big).
\end{align*}
Then
\begin{multline}\label{eq:A1.1}
|\psi_\K(x)-x| \leq                                                           \\
\frac{\sqrt{x}}{\pi}\Big[\Big(\frac{1}{2}\log^2\big(e^{\lamb+1}+33.5251\rdisc\big)
                             -\frac{1}{2}\log^2\rdisc
                             +3.9792\log\rdisc-3.4969\Big)\nK+25.5362\Big]    \\
 +1.0155\log\disc
 -2.1042\nK
 +8.8590
 +\epsilon_\K(x,T).
\end{multline}
Moreover we also have
\begin{multline}\label{eq:A1.2}
|\psi_\K(x)-x|
\leq (2.2543\sqrt{x}+1.0155)\log\disc + (0.9722\sqrt{x}-2.1042)\nK \\
    + \frac{x}{10}
    + 9.0458\sqrt{x}
    + 7.0320
    + \epsilon_\K(x,10).
\end{multline}
\end{theorem}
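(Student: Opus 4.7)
The plan is to apply the general parametric bound Theorem~\ref{th:B2.5} of this paper and to make suitable choices of the two free parameters $\kappa$ and $T$; the two inequalities follow from different specialisations.

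For \eqref{eq:A1.1} I would perform a near-optimal choice of $\kappa$. Reading off the $\kappa$-dependent part of the bound supplied by Theorem~\ref{th:B2.5}, its stationarity condition takes the form $\kappa e^{\kappa}=z$ for an explicit quantity $z$ depending on $x$, $\nK$ and $\rdisc$; its unique positive solution is $\kappa=W(z)$, which is precisely the definition of $\lamb$ displayed in the statement, including the matching of the three summands $\frac{(\sqrt 5-1)\pi\sqrt{x}}{2\nK}$, $21.3270$ and $\frac{33.3542}{\nK}$ inside the bracket. The value of $T$ is then fixed as the natural companion of $\kappa$ arising from the change of variable (which explains why $T$ contains $1/\lamb$ times the same bracket as appears inside $W$), plus the additive constant $8.2822$ ensuring the admissibility range required by Theorem~\ref{th:B2.5}. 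After substitution, the principal pieces of the bound can be rearranged via the algebraic identity $\tfrac12\log^2 A-\tfrac12\log^2 B=\tfrac12(\log A-\log B)(\log A+\log B)$, which produces the compact form $\tfrac12\log^2(e^{\lamb+1}+33.5251\rdisc)-\tfrac12\log^2\rdisc+3.9792\log\rdisc-3.4969$.

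For \eqref{eq:A1.2} the strategy is different: no transcendental optimisation is attempted. I would instead fix $T=10$ directly in Theorem~\ref{th:B2.5} and choose $\kappa$ in a crude but analytic fashion (e.g.\ so that $e^{\kappa}$ grows like $\sqrt{x}$), keeping the resulting bound polynomial in $\sqrt{x}$, $\log\disc$ and $\nK$. The summand $x/10$ is the direct image of an $x/T$ ingredient of the parametric bound at $T=10$; the $\sqrt{x}\log\disc$ contribution comes from the standard $\sqrt{x}\log\disc$ piece already present in the formulations of \cite{GrenieMolteni2}; and the remaining numerical constants drop out of the substitution.

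The main obstacle I expect is not conceptual but numerical: the clean propagation of all the explicit constants through the optimisation and through the $\log^2$-rearrangement, together with the verification that the chosen values of $T$ satisfy the admissibility condition of Theorem~\ref{th:B2.5} uniformly in the range $x\geq 3$. If this uniformity fails for very small $x$, a short separate elementary treatment in that range would be required to complete the proof.
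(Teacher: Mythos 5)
There is a genuine conceptual error in your treatment of \eqref{eq:A1.1}: you have misidentified which parameter produces the Lambert-$W$ function. In the paper's argument the parameter $\kappa$ of Theorem~\ref{th:B2.5} is \emph{not} optimised transcendentally at all: its near-optimal value is found by minimising the quantity $\kappa e^{\frac2\kappa+\frac\kappa2}$ appearing in the asymptotic expansion, which gives the constant $\kappa=\sqrt5-1$ (this is where the factors $\sqrt5-1$ and $e^{\sqrt5}$ in the statement come from, since $\frac2\kappa+\frac\kappa2=\sqrt5$ there). The Lambert-$W$ quantity $\lamb$ arises instead from optimising $T$: balancing the increasing terms in $\log\frac{T}{2\pi}$ against the decreasing term $\frac{\kappa x}{2T}$ leads to an equation of the shape $T\bigl(\log\frac{T}{2\pi}+\sqrt5+\log\rdisc\bigr)=\frac{\kappa\pi\sqrt x}{2\nK}+\cdots$, whose solution is $T_W=a/\lamb$ with $\lamb=W\bigl(\frac{e^{\sqrt5}}{2\pi}\rdisc a\bigr)$. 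Your identification $\kappa=W(z)=\lamb$ cannot work: $\lamb\sim\frac12\log x\to\infty$, whereas Theorem~\ref{th:B2.5} requires $0<\kappa\le 2$, so this choice is inadmissible for all but tiny $x$ (and would in any case ruin the coefficients $\frac2\kappa\pm\frac\kappa2$). The additive constant $8.2822$ is likewise not a mere admissibility fix; it is the minimiser $T_F$ of the $F(T)$-estimate (root of $T^2-7.0604T-10.1186$), and the key step is the monotonicity argument showing $T_0\le T_F+T_W$, which licenses substituting $T_F+T_W$ into the bound evaluated at the true near-optimal $T_0$. The same misconception recurs in your proof of \eqref{eq:A1.2}: choosing $\kappa$ so that $e^\kappa\asymp\sqrt x$ again violates $0<\kappa\le 2$; the paper simply takes the constants $\kappa=2$ and $T=10$, so that $\frac{\kappa x}{2T}=\frac{x}{10}$ and $F(10)\le 2.2543\pi$, $G(10)\le 0.9722\pi$.

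Beyond this, your sketch omits ingredients without which Theorem~\ref{th:B2.5} cannot yield the stated bounds: that theorem only controls $\bigl|\psi_\K(x)-x+\sum_{|\gamma|<T}\frac{x^\rho}{\rho}\bigr|$, so one must add the zero-sum bound \eqref{eq:A2.1c} of Lemma~\ref{lem:A2.3} (source of the constants $3.9792$, $-1.4969$, $25.5362$ and of the $\frac12\log^2\frac{T}{2\pi}$ term) and the bound on $|r_\K|$ from Lemma~\ref{lem:A2.1} (source of $1.0155\log\disc-2.1042\nK$). Finally, the passage from the optimised expression to the closed form in \eqref{eq:A1.1} is not just the $\log^2$-rearrangement you describe: it needs the inequality $E(x,T_0)\le\frac12(\lamb+1)^2+\cdots$, which rests on Lemma~\ref{lem:A3.1} controlling the leftover $T_0^{-2}$ terms by $\frac{0.5167\pi}{\nK\sqrt x}$ via a computation over minimal discriminants. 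As written, your proposal would not produce \eqref{eq:A1.1} or \eqref{eq:A1.2}; the correct route keeps $\kappa$ a fixed constant and reserves the Lambert-$W$ optimisation for $T$.
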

\noindent
The choice of $T$ we have made to deduce Theorem~\ref{th:B1.1} from
Theorem~\ref{th:B2.5} gives the best coefficients for all terms in the
asymptotic expansion, down to the term of order $\sqrt{x}$.
This choice is not too far from the best our method can achieve, even for
finite $x$; in other words, the $T$ we choose in Theorem~\ref{th:B1.1}
is not too far from the optimal $T$ for Theorem~\ref{th:B2.5}.
Note that the values of the other parameters we fix in the proofs of
Theorem~\ref{th:B1.1} and Theorem~\ref{th:B2.5} affect the term of order
$\sqrt{x}$ of the asymptotic expansion.

Inequality~\eqref{eq:A1.2} is a kind of Chebyshev bound, which has the
interesting property that the linear term has a coefficient independent of the
field. It is better than~\eqref{eq:A1.1} when $x$ is very small with respect to
$\rdisc$.

\subsection*{Asymptotic expansions}
We discuss the asymptotic expansions of~\eqref{eq:A1.1} when $x$ diverges and
$\K$ is fixed. We have, as $t\to{+\infty}$
\[
W(t) = \log t-\log\log t + \frac{\log\log t}{\log t}
+ O\Big(\Big(\frac{\log\log t}{\log t}\Big)^2\Big)
\]
and, even though we will not use it,
\[
\forall t\geq e,\quad
W(t) \leq \log t-\log\log t + 1.024\frac{\log\log t}{\log t}
\]
so that the asymptotic expansion we are computing is not too far from an upper
bound. From the first expansion, we deduce
\[
\frac{1}{2}W(t)^2+W(t)
=
\frac{1}{2}\log^2 t - \log t\log\log t + \log t + \frac{1}{2}(\log\log t)^2
+ o(1).
\]
We have $\lamb\sim \frac{1}{2}\log x$ when $x$ diverges, thus
\[
\log^2\big(e^{\lamb+1}+33.5251\rdisc\big)
= \Big(\lamb+1+\frac{33.5251\rdisc}{e^{\lamb+1}}+O\Big(\frac{1}{e^{2w}}\Big)\Big)^2
=  \lamb^2+2w+1+O\Big(\frac{\log x}{\sqrt{x}}\Big).
\]
We thus have, taking $\nu:=\frac{\sqrt{5}-1}{2}e^{\sqrt{5}}$
\begin{align*}
\frac{1}{2}\log^2&\big(e^{\lamb+1}+33.5251\rdisc\big)-\frac{1}{2}\log^2\rdisc \\
=&
 \frac{1}{2}\lamb^2+\lamb+\frac{1}{2}-\frac{1}{2}\log^2\rdisc+o(1)            \\
=&
 \frac{1}{2}\Big[\log\rdisc
                + \frac{1}{2}\log x
                + \log\Big(\frac{\nu}{2\nK}\Big)
             \Big]^2                                                          \\
&-      \Big[\log\rdisc
                + \frac{1}{2}\log x
                + \log\Big(\frac{\nu}{2\nK}\Big)
             \Big]
             \Big[\log\log x-\log 2+\frac{2\log\rdisc}{\log x}
                + \frac{2}{\log x}
                  \log\Big(\frac{\nu}{2\nK}\Big)
             \Big]                                                            \\
&+           \log\rdisc
                + \frac{1}{2}\log x
                + \log\Big(\frac{\nu}{2\nK}\Big)
 +           \frac{1}{2}\Big[\log\log x-\log 2\Big]^2
 +           \frac{1}{2}
 -           \frac{1}{2}\log^2\rdisc
 + o(1)                                                                       \\
=&
 \frac{1}{8}\log^2x - \frac{1}{2}\log x\log\log x
 +\frac{1}{2}\Big[\log\rdisc
                + \log\Big(\frac{e\nu}{\nK}\Big)
             \Big]\log x
 +\frac{1}{2}(\log\log x)^2                                                   \\
&-           \Big[\log\rdisc
                + \log\Big(\frac{\nu}{\nK}\Big)
             \Big]\log\log x
 + \frac{1}{2}\log^2\Big(\frac{\nu}{\nK}\Big)
 + \log\Big(\frac{\nu}{\nK}\Big)\log\rdisc
 + \frac{1}{2}
 + o(1).
\end{align*}
Thus the right hand side of~\eqref{eq:A1.1} is
\begin{align*}
&\nK\frac{\sqrt{x}}{2\pi}
\Big[
      \frac{1}{4}\log^2x - \log x\log\log x
      + \Big[\log\rdisc
             + \log\Big(\frac{e\nu}{\nK}\Big)
        \Big]\log x
      + (\log\log x)^2                      \\
&\qquad\quad
      -2\Big[\log\rdisc
             + \log\Big(\frac{\nu}{\nK}\Big)
        \Big]\log\log x
      + \Big(2\log\Big(\frac{\nu}{\nK}\Big)
             + 7.9584
        \Big)\log\rdisc
      + \log^2\Big(\frac{\nu}{\nK}\Big)
      - 5.9938
  \Big]                                     \\
&+ 25.5362\frac{\sqrt{x}}{\pi}
   + o(\sqrt{x}).
\end{align*}
Since $e\nu\simeq 15.7187\dots$ the coefficient of
$\nK\frac{\sqrt{x}}{2\pi}\log x$ is lower than $\log\rdisc$ if $\nK\geq 16$.

\noindent{}We have verified that the first five terms in the asymptotic
expansion cannot be improved by any choice of the parameters. On the other
hand, the sixth term contains the constants $7.9584$, $-5.9938$ and $25.5362$
which can be changed acting on the parameters.

\noindent{}The constants hidden in the $o(.)$ terms are unfortunately not
uniform in $\K$ and are not even controlled by a linear bound in $\nK$ and
$\log\disc$: for instance, the rather innocent looking $\frac{\nK\sqrt{x}}{T}$
is asymptotic to $\frac{2}{(\sqrt{5}-1)\pi}\nK^2\log x$.

\noindent{}To facilitate the comparison with earlier results, we reorganize
this asymptotic expansion in a form similar to Lagarias and Odlyzko's or
Oesterlé's results. In this form, the right hand side of~\eqref{eq:A1.1} is
\begin{align*}
&\!\!\!\!\frac{\sqrt{x}}{2\pi}
   \Big[\log x
       - 2\log\log x
       + 2\log\Big(\frac{\nu}{\nK}\Big)
       + 7.9584
    \Big]\log\disc                                                         \\
&+\frac{\sqrt{x}}{8\pi}
    \Big[\log^2 x
       - 4\log x\log\log x
       + 4\log\Big(\frac{e\nu}{\nK}\Big)\log x
       + 4(\log\log x)^2
       - 8\log\Big(\frac{\nu}{\nK}\Big)\log\log x                          \\
&\qquad
       + 4 \log^2\Big(\frac{\nu}{\nK}\Big)
       + 4
       - 23.9752
    \Big]\nK
+ 25.5362\frac{\sqrt{x}}{\pi}
+ o(\sqrt{x})                                                              \\
=&\frac{\sqrt{x}}{2\pi}
    \Big[\log  \Big(\frac{e^2\nu^2}{\nK^2}\,\frac{x}{\log^2 x}\Big)
       + 5.9584
    \Big]\log\disc                                                         \\
 &+\frac{\sqrt{x}}{8\pi}
    \Big[\log^2\!\Big(\frac{x}{\log^2 x}\Big)
       + 4\log\!\Big(\frac{e\nu}{\nK}\Big)
          \log\!\Big(\frac{x}{\log^2 x}\Big)
       + 4\log^2\!\Big(\frac{e\nu}{\nK}\Big)
       - 8\log\!\Big(\frac{\nu}{\nK\log x}\Big)
       - 27.9752
    \Big]\nK                                                               \\
 &+25.5362\frac{\sqrt{x}}{\pi}
+ o(\sqrt{x})                                                              \\
=&\frac{\sqrt{x}}{2\pi}
    \Big[\log  \Big(\frac{e^2\nu^2}{\nK^2}\,\frac{x}{\log^2 x}\Big)
       + 5.9584
    \Big]\log\disc                                                         \\
 &+\frac{\sqrt{x}}{8\pi}
    \Big[\log^2\Big(\frac{e^2\nu^2}{\nK^2}\,\frac{x}{\log^2 x}\Big)
       - 4\log\Big(\frac{e^2\nu^2}{\nK^2}\,\frac{1}{\log^2 x}\Big)
       - 19.9752
    \Big]\nK
+ 25.5362\frac{\sqrt{x}}{\pi}
+ o(\sqrt{x}).
\end{align*}

\smallskip
As $\rdisc$ diverges~\eqref{eq:A1.1} is not very efficient, but still gives
something similar to~\eqref{eq:A1.2}.

\subsection*{Numerical experiments}
In \cite{GrenieMolteni2} we prove the following results. First in
Corollary~1.2:
\begin{equation}\label{eq:A1.3}
\forall x\geq 100,\quad|\psi_\K(x)-x| \leq
\sqrt{x}\Big[\Big(\frac{\log   x}{2\pi} + 2\Big)\log\disc
           + \Big(\frac{\log^2 x}{8\pi} + 2\Big)\nK
        \Big].
\end{equation}
Then in Corollary~1.3:
\begin{multline}\label{eq:A1.4}
\forall x\geq 3,\phantom{000}\quad
|\psi_\K(x)-x|
\leq  \sqrt{x}\Big[\Big(\frac{1}{2\pi}\log\Big(\frac{18.8\,x}{\log^2 x}\Big)
                        + 2.3
                   \Big)\log\disc                                         \\
                  + \Big(\frac{1}{8\pi}\log^2\Big(\frac{18.8\,x}{\log^2 x}\Big)
                          + 1.3
                     \Big)\nK
                  + 0.3\log x
                  + 14.6
              \Big]
\end{multline}
and
\begin{multline}\label{eq:A1.5}
\forall x\geq 2000,\quad
|\psi_\K(x)-x|
\leq  \sqrt{x}\Big[\Big(\frac{1}{2\pi}\log\Big(\frac{x}{\log^2 x}\Big)
                        + 1.8
                   \Big)\log\disc                                        \\
                  +\Big(\frac{1}{8\pi}\log^2\Big(\frac{x}{\log^2 x}\Big)
                        + 1.1
                   \Big)\nK
                  + 1.2\log x
                  + 10.2
              \Big].
\end{multline}
We compare the upper bound~\eqref{eq:A1.1} to these three formulas for
several values of $\nK$ and four discriminants for each $\nK$. We test
totally real and totally imaginary fields for the minimal discriminants
allowed by Odlyzko's Table 3 in~\cite{OdlyzkoTables} and for their squares.
In each table we indicate the minimal $x$
after which Formula~\eqref{eq:A1.1} is better than the corresponding formula.
One observes that \eqref{eq:A1.1} is always better than \eqref{eq:A1.4},
nearly always better than \eqref{eq:A1.3} (except for quadratic fields) and
most of the times better than \eqref{eq:A1.5}. The best between
\eqref{eq:A1.1} and \eqref{eq:A1.2} is always better than
(\ref{eq:A1.3}--\ref{eq:A1.5}) except for the case of quadratic fields
in
Formula~\eqref{eq:A1.3}.\\
\bigskip\\
\noindent%
{%
\def\header{$\nK$ & \multicolumn{1}{c}{$\disc$} & \eqref{eq:A1.3} & \eqref{eq:A1.4} & \eqref{eq:A1.5}\\\hline}%
\def\line[#1, #2, #3, #4, #5]{#1 & #2 & #3 & #4 & #5\\}%
\def\sheader{\multicolumn{1}{c}{$\disc$} & \eqref{eq:A1.3} & \eqref{eq:A1.4} & \eqref{eq:A1.5}\\\hline}%
\def\sline[#1, #2, #3, #4, #5]{#2 & #3 & #4 & #5\\}%
\centerline{From when does \eqref{eq:A1.1} get better than (\ref{eq:A1.3}--\ref{eq:A1.5})}
\begin{tabular}{@{}c@{}c@{}c@{}}
& real & imaginary
\\
minimal
&
\begin{tabular}{r>{\(}l<{\)}rrr}
\header
\line[  2, 4.9535              , 187929, 3, 2000]
\line[  6, 2.9169\cdot 10^{5}  ,    107, 3, 2000]
\line[ 10, 2.3927\cdot 10^{11} ,    100, 3, 2000]
\line[ 20, 6.5601\cdot 10^{27} ,    100, 3, 2000]
\line[ 50, 7.1245\cdot 10^{81} ,    100, 3, 2425]
\line[100, 1.5472\cdot 10^{177},    100, 3, 2713]
\line[200, 8.0911\cdot 10^{374},    100, 3, 2851]
\end{tabular}
\quad&\quad
\begin{tabular}{>{\(}l<{\)}rrr}
\sheader
\sline[  2, 2.9633              , 445897, 3, 2000]
\sline[  6, 9.3896\cdot 10^{3}  ,    106, 3, 2000]
\sline[ 10, 1.8967\cdot 10^{8}  ,    100, 3, 2000]
\sline[ 20, 1.7076\cdot 10^{20} ,    100, 3, 2000]
\sline[ 50, 2.8528\cdot 10^{59} ,    100, 3, 2306]
\sline[100, 3.0629\cdot 10^{128},    100, 3, 2663]
\sline[200, 2.1888\cdot 10^{271},    100, 3, 2843]
\end{tabular}
\\
\\
square
&
\begin{tabular}{r>{\(}l<{\)}rrr}
\header
\line[  2, 2.4538\cdot 10^{1}  , 25000, 3, 2000]
\line[  6, 8.5086\cdot 10^{10} ,   100, 3, 2000]
\line[ 10, 5.7250\cdot 10^{22} ,   100, 3, 2000]
\line[ 20, 4.3035\cdot 10^{55} ,   100, 3, 2074]
\line[ 50, 5.0759\cdot 10^{163},   100, 3, 2597]
\line[100, 2.3937\cdot 10^{354},   100, 3, 2757]
\line[200, 6.5467\cdot 10^{749},   100, 3, 2830]
\end{tabular}
&
\begin{tabular}{>{\(}l<{\)}rrr}
\sheader
\sline[  2, 8.7813              , 81922, 3, 2000]
\sline[  6, 8.8164\cdot 10^{7}  ,   100, 3, 2000]
\sline[ 10, 3.5975\cdot 10^{16} ,   100, 3, 2000]
\sline[ 20, 2.9158\cdot 10^{40} ,   100, 3, 2000]
\sline[ 50, 8.1386\cdot 10^{118},   100, 3, 2532]
\sline[100, 9.3814\cdot 10^{256},   100, 3, 2745]
\sline[200, 4.7910\cdot 10^{542},   100, 3, 2844]
\end{tabular}
\end{tabular}
\vfill\pagebreak
\centerline{From when does the best of \eqref{eq:A1.1} and
\eqref{eq:A1.2} get better than (\ref{eq:A1.3}--\ref{eq:A1.5})}
\begin{tabular}{@{}c@{}c@{}c@{}}
& real & imaginary
\\
minimal
&
\begin{tabular}{r>{\(}l<{\)}rrr}
\header
\line[  2, 4.9535              , 187929, 3, 2000]
\line[  6, 2.9169\cdot 10^{5}  ,    100, 3, 2000]
\line[ 10, 2.3927\cdot 10^{11} ,    100, 3, 2000]
\line[ 20, 6.5601\cdot 10^{27} ,    100, 3, 2000]
\line[ 50, 7.1245\cdot 10^{81} ,    100, 3, 2000]
\line[100, 1.5472\cdot 10^{177},    100, 3, 2000]
\line[200, 8.0911\cdot 10^{374},    100, 3, 2000]
\end{tabular}
\quad&\quad
\begin{tabular}{>{\(}l<{\)}rrr}
\sheader
\sline[  2, 2.9633              , 445897, 3, 2000]
\sline[  6, 9.3896\cdot 10^{3}  ,    100, 3, 2000]
\sline[ 10, 1.8967\cdot 10^{8}  ,    100, 3, 2000]
\sline[ 20, 1.7076\cdot 10^{20} ,    100, 3, 2000]
\sline[ 50, 2.8528\cdot 10^{59} ,    100, 3, 2000]
\sline[100, 3.0629\cdot 10^{128},    100, 3, 2000]
\sline[200, 2.1888\cdot 10^{271},    100, 3, 2000]
\end{tabular}
\\
\\[-1.7ex]
square
&
\begin{tabular}{r>{\(}l<{\)}rrr}
\header
\line[  2, 2.4538\cdot 10^{1}  , 25000, 3, 2000]
\line[  6, 8.5086\cdot 10^{10} ,   100, 3, 2000]
\line[ 10, 5.7250\cdot 10^{22} ,   100, 3, 2000]
\line[ 20, 4.3035\cdot 10^{55} ,   100, 3, 2000]
\line[ 50, 5.0759\cdot 10^{163},   100, 3, 2000]
\line[100, 2.3937\cdot 10^{354},   100, 3, 2000]
\line[200, 6.5467\cdot 10^{749},   100, 3, 2000]
\end{tabular}
&
\begin{tabular}{>{\(}l<{\)}rrr}
\sheader
\sline[  2, 8.7813              , 81922, 3, 2000]
\sline[  6, 8.8164\cdot 10^{7}  ,   100, 3, 2000]
\sline[ 10, 3.5975\cdot 10^{16} ,   100, 3, 2000]
\sline[ 20, 2.9158\cdot 10^{40} ,   100, 3, 2000]
\sline[ 50, 8.1386\cdot 10^{118},   100, 3, 2000]
\sline[100, 9.3814\cdot 10^{256},   100, 3, 2000]
\sline[200, 4.7910\cdot 10^{542},   100, 3, 2000]
\end{tabular}
\end{tabular}
}
\begin{acknowledgements}
We wish to thank Alberto Perelli, for his valuable remarks and comments,
and the referee for her/his careful reading.
All computations in this paper has been made using PARI/GP~\cite{PARI2}.
The authors are members of the GNSAGA INdAM group.
\end{acknowledgements}

\section{Reproofs}\label{sec:A2}
We reprove the results of~\cite{GrenieMolteni1} and \cite{GrenieMolteni2}
adding an additional parameter to the main result of~\cite{GrenieMolteni2} and
with a couple more digits. The methods of proof are the same and will thus not
be repeated.

We recall that $r_\K$ is defined as the constant such that
\[
\frac{\zeta'_\K}{\zeta_\K}(s) = \frac{r_1+r_2-1}{s} + r_\K + O(s)\quad \text{as $s\to  0$}.
\]
\begin{lemma}\label{lem:A2.1}
Assume GRH. One has
\[
|r_\K|\leq  1.0155\log \disc - 2.1042 \nK + 8.3423 - e_\K
\]
where
\[
e_\K :=
\left\{
\begin{array}{ll}
4.4002 & \text{if }(r_1,r_2)=(1,0) \\
0.6931 & \text{if }(r_1,r_2)=(0,1) \\
0      & \text{otherwise}.
\end{array}
\right.
\]
\end{lemma}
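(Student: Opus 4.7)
The plan is to follow the method of Lemma 2.1 in \cite{GrenieMolteni1} (whose constants are here refined by carrying one or two extra digits), using the Hadamard product / functional equation to express $r_\K$ exactly and then bounding a sum over zeros under GRH.

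First I would start from the completed zeta function $\xi_\K(s)=|\disc|^{s/2}\Gamma_\R(s)^{r_1}\Gamma_\C(s)^{r_2}\zeta_\K(s)$ with $\Gamma_\R(s)=\pi^{-s/2}\Gamma(s/2)$ and $\Gamma_\C(s)=(2\pi)^{-s}\Gamma(s)$, take its logarithmic derivative, and use the Hadamard expansion
\[
\frac{\zeta'_\K}{\zeta_\K}(s)=B_\K-\frac{1}{s-1}-\frac{1}{s}-\tfrac{1}{2}\log\disc-r_1\frac{\Gamma'_\R}{\Gamma_\R}(s)-r_2\frac{\Gamma'_\C}{\Gamma_\C}(s)+\sum_{\rho}\Bigl(\frac{1}{s-\rho}+\frac{1}{\rho}\Bigr),
\]
the sum running over the non-trivial zeros. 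Subtracting the pole $d_\K/s$ and letting $s\to 0$ isolates $r_\K$ as an explicit combination of $\log\disc$, digamma values at $0$ (handled via the counterterm in the pole), constants $\gamma$, $\log\pi$, $\log 2$ arising from the archimedean factors, the Hadamard constant $B_\K$, and the symmetric zero sum $\sum_\rho 1/\rho$. Using $B_\K=-\tfrac{1}{2}\sum_\rho(\tfrac{1}{\rho}+\tfrac{1}{\bar\rho})$, these last two contributions collapse into the real and absolutely convergent sum $\tfrac{1}{2}\sum_\rho\tfrac{1}{\rho(1-\rho)}$.

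Second I would bound this zero sum under GRH. Since every $\rho=\tfrac{1}{2}+i\gamma$ has $\rho(1-\rho)=\tfrac{1}{4}+\gamma^2$, the sum is positive and real, and Weil's explicit formula (applied to the same test function used in \cite{GrenieMolteni1}, or directly \cite[Lemma~2.1]{GrenieMolteni1} with refined numerics) yields a bound of the form $c_1\log\disc+c_2\nK+c_3$ with $c_1$ slightly larger than~$1$ (whence the $1.0155$).

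Third I would collect all explicit constants, absorbing the archimedean contributions (essentially $\nK(\log(2\pi)+\gamma)$ up to sign) into the $-2.1042\,\nK+8.3423$ terms, and finally sharpen the bound in the two small-signature cases. For $(r_1,r_2)=(1,0)$ the field is $\Q$, $\log\disc=0$, and $r_\Q=\log(2\pi)$ is known exactly, forcing $e_\K=8.3423-2.1042-\log(2\pi)=4.4002$; for $(r_1,r_2)=(0,1)$ the field is imaginary quadratic and the factorization $\zeta_\K=\zeta\cdot L(\cdot,\chi)$ gives a direct evaluation of $r_\K$ that saves exactly $\log 2=0.6931$ over the generic estimate.

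The principal obstacle is Step~2: carefully propagating the GRH-based bound on $\sum_\rho 1/(\rho(1-\rho))$ with enough numerical precision to obtain the announced four-digit constants, particularly the coefficient $1.0155$ of $\log\disc$, which is genuinely strictly larger than the ``trivial'' coefficient~$1$ coming from the $\tfrac{1}{2}\log\disc$ that appears twice through the functional equation. Everything else is bookkeeping of archimedean constants which, being of closed form, can be carried to arbitrary precision.
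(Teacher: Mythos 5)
Your outline is essentially the paper's own argument: the paper simply invokes the proof of Lemma~3.1 of \cite{GrenieMolteni1} (not Lemma~2.1, a harmless slip) with a couple more digits, i.e.\ exactly the Hadamard/explicit-formula route you describe, expressing $r_\K$ through $\tfrac12\log\disc$, the archimedean constants and $B_\K=-\tfrac12\sum_\rho\frac{1}{\rho(1-\rho)}$, and bounding the zero sum under GRH; and the case of $\Q$ is handled identically, since $r_\Q=\log 2\pi$ forces $e_\Q=8.3423-2.1042-\log(2\pi)=4.4002$. The one place where you deviate is the imaginary quadratic case: the paper does not pass through the factorization $\zeta_\K=\zeta\cdot L(\cdot,\chi)$, but observes that in the general proof an $r_2\log 2$ term (coming from the complex Gamma factor via the duplication formula) was discarded, and that for $(r_1,r_2)=(0,1)$ it can simply be restored, giving the saving $\log 2=0.6931$. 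Your alternative via Dirichlet $L$-functions is plausible but, as stated, unsubstantiated: $\frac{L'}{L}(0,\chi)$ is not known in closed form, and the claim that this route saves \emph{exactly} $\log 2$ would itself need an argument, whereas the paper's bookkeeping makes the $\log 2$ appear for free. Apart from this sub-case, the two proofs coincide, and in both the four-digit constants ($1.0155$, $-2.1042$, $8.3423$) are inherited from redoing the numerics of the cited lemma rather than proved afresh.
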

\begin{proof}
The proof of Lemma~3.1 in~\cite{GrenieMolteni1} gives the general case.
Moreover, for $\Q$ we know that $r_\Q=\log 2\pi$. For imaginary quadratic
fields, an $r_2\log 2$ term can be restored in the proof of the aforementioned
lemma.
\end{proof}

\begin{lemma}\label{lem:A2.2}
Assume GRH. One has
\[
\sum_{|\gamma|\leq 5} \frac{1}{|\rho|} \leq 1.0111\log\disc - 1.6550 \nK + 7.0320.
\]
\end{lemma}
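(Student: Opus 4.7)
The plan is to majorize $\chi_{|\gamma|\leq 5}(\gamma)\cdot|\rho|^{-1}$ by a smooth positive test function of $\gamma^{2}$ whose sum over \emph{all} non-trivial zeros can be evaluated through the Hadamard product for the completed zeta function, following the same template as the proof of Lemma~\ref{lem:A2.1} (i.e.\ Lemma~3.1 of~\cite{GrenieMolteni1}). Under GRH every non-trivial zero has the form $\rho=1/2+i\gamma$, so $|\rho|=\sqrt{1/4+\gamma^{2}}$, and the cut-off $|\gamma|\leq 5$ corresponds to the zeros with $1/2\leq|\rho|\leq\sqrt{101}/2$.

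First I would fix an auxiliary parameter $b>0$ and pick the smallest constant
\[
a(b):=\sup_{|\gamma|\leq 5}\frac{b^{2}+\gamma^{2}}{\sqrt{1/4+\gamma^{2}}},
\]
which (being a convex/concave comparison on a compact interval) is attained at $\gamma=0$ or $\gamma=\pm 5$, giving simply $a(b)=\max\big(2b^{2},\,(b^{2}+25)/\sqrt{25.25}\big)$. By construction
\[
\frac{\chi_{|\gamma|\leq 5}(\gamma)}{\sqrt{1/4+\gamma^{2}}}\leq\frac{a(b)}{b^{2}+\gamma^{2}}\qquad\text{for every }\gamma\in\R,
\]
so $\sum_{|\gamma|\leq 5}1/|\rho|\leq(a(b)/b)\sum_{\rho}b/(b^{2}+\gamma^{2})$.

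Next, starting from the logarithmic derivative of the Hadamard product for $\xi_\K$, taking real parts at $s=1/2+b$ and pairing $\rho$ with $\bar\rho$ gives the explicit identity
\[
\sum_{\rho}\frac{b}{b^{2}+\gamma^{2}}
=\frac{\zeta_\K'}{\zeta_\K}\!\Big(\tfrac{1}{2}+b\Big)+\frac{1}{b+1/2}+\frac{1}{b-1/2}
+\frac{1}{2}\log\frac{\disc}{\pi^{\nK}}
+\frac{r_{1}}{2}\frac{\Gamma'}{\Gamma}\!\Big(\tfrac{1/2+b}{2}\Big)
+r_{2}\frac{\Gamma'}{\Gamma}\!\Big(\tfrac{1}{2}+b\Big).
\]
Each ancillary term is estimated exactly as in Lemma~\ref{lem:A2.1}: the $\zeta_\K'/\zeta_\K$ value is controlled by the absolutely convergent Dirichlet series for $\Re s=1/2+b$, and the digamma values at real arguments produce a clean bound of the shape $\alpha(b)\log\disc+\beta(b)\nK+\gamma(b)$. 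Multiplying by $a(b)/b$ yields an inequality of the form claimed.

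The main obstacle is the numerical optimization: one has to pick $b$ so that $(a(b)/(2b))\alpha$ lands exactly at $1.0111$, while keeping the $\nK$-coefficient below $-1.6550$ and the absolute constant below $7.0320$. Because $a(b)/b$ is not monotonic (the two branches of $a(b)$ cross at some $b^{\star}$) a short search is needed, but no new analytic input beyond Lemma~\ref{lem:A2.1} is required, and the arithmetic contribution coming from the $e_\K$-refinement for $\Q$ and imaginary quadratic fields is absorbed into the constant $7.0320$ in the same way as the $-e_\K$ correction is absorbed in Lemma~\ref{lem:A2.1}.
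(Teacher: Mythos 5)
Your reduction to a single Cauchy--Poisson kernel cannot reach the stated constants, and the loss is structural, not a matter of tuning $b$. With your majorant the final coefficient of $\log\disc$ is $\frac{a(b)}{b}\cdot\frac12$, because in the Hadamard/explicit-formula identity the coefficient of $\log\disc$ in $\sum_\rho\Re\frac{1}{s-\rho}$ is exactly $\tfrac12$, independently of $b$ (the digamma and $\zeta_\K'/\zeta_\K$ terms only affect the $\nK$-coefficient and the constant). Since
\[
\frac{a(b)}{2b}=\frac{1}{2b}\max\Big(2b^2,\ \frac{b^2+25}{\sqrt{25.25}}\Big),
\]
the first branch gives $b$ and the second is decreasing up to the crossing point $b^2=25/(2\sqrt{25.25}-1)\approx 2.762$, so the minimum over $b$ is attained at the crossing and equals $\approx 1.662$. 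Thus the best bound your scheme can produce has the shape $1.66\log\disc+\beta\nK+\gamma$, and no choice of $\beta,\gamma$ rescues the claim: for fixed degree the discriminant is unbounded, so $1.66\log\disc+\beta\nK+\gamma\leq 1.0111\log\disc-1.6550\nK+7.0320$ would force $0.65\log\rdisc\lesssim$ const$/\nK$, which fails for all but a few tiny fields. The root of the problem is that $1/|\rho|\approx 1/|\gamma|$ decays like $1/|\gamma|$ while $b/(b^2+\gamma^2)$ decays like $1/\gamma^2$; forcing the kernel to dominate $1/|\rho|$ all the way out to $|\gamma|=5$ (the constraint at $\gamma=\pm5$) makes the positivity extension to all zeros far too expensive.

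The paper does not argue this way: Lemma~\ref{lem:A2.2} is quoted from Lemma~3.1 of~\cite{GrenieMolteni2}, where the truncated sum is handled through explicit zero-counting estimates for the number of zeros with $|\gamma|\leq t$ (the same machinery as Lemma~\ref{lem:A2.3}) together with partial summation on $[0,5]$. That route keeps the cut-off $|\gamma|\leq5$ intact, and its natural main term for the $\log\disc$-coefficient is $\frac{1}{\pi}\log(10+\sqrt{101})\approx 0.954$, which after the explicit error terms lands at $1.0111$ --- consistent with the statement and unreachable by the single-kernel positivity argument you propose. If you want a self-contained proof, you should bound the zero-counting function explicitly and integrate $\big(1/4+t^2\big)^{-1/2}$ against it, rather than comparing with $\sum_\rho\Re\frac{1}{s-\rho}$ at a single real point.
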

\begin{proof}
see Lemma~3.1 in~\cite{GrenieMolteni2}.
\end{proof}
As in \cite{GrenieMolteni2} we will denote
$W_\K(T):=\log\disc+\nK\log\big(\frac{T}{2\pi}\big)$; this is obviously not the
Lambert $W$ function, and we believe that there is no risk of confusion.
\begin{lemma}\label{lem:A2.3}
We have, for all $T\geq 5$,
\begin{subequations}
\begin{align}
\sum_{|\gamma|\leq T}1
&\leq \frac{T}{\pi}\Big(1+\frac{1.4427}{T}\Big)W_\K(T)
      - \frac{T}{\pi}\Big(1 - \frac{8.9250}{T}\Big)\nK
      + \frac{8.6542}{\pi},                                            \label{eq:A2.1a}\\
\sum_{|\gamma|\geq T}\frac{1}{|\rho|^2}
&\leq \Big(1 + \frac{2.8854}{T}\Big)\frac{W_\K(T)}{\pi T}
    +\Big(1 + \frac{18.6019}{T}\Big)\frac{\nK}{\pi T}
    +\frac{17.3084}{\pi T^2},                                          \label{eq:A2.1b}\\
\sum_{\substack{\rho\\|\gamma|\leq T}}\frac{\pi}{|\rho|}
&\leq \Big(\log\Big(\frac{T}{2\pi}\Big) + 3.9792\Big)\log \disc
     + \Big(\frac{1}{2}\log^2\Big(\frac{T}{2\pi}\Big) - 1.4969\Big)\nK
     + 25.5362.                                                        \label{eq:A2.1c}
\end{align}
\end{subequations}
\end{lemma}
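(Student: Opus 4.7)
The three inequalities are exactly Lemma~3.1 of \cite{GrenieMolteni2} recomputed with one or two extra decimal digits, as announced at the opening of the section. The plan is therefore to reproduce that argument rather than invent a new one: \eqref{eq:A2.1a} comes from an explicit Riemann--von~Mangoldt counting formula for the zeros of $\zeta_\K$, and \eqref{eq:A2.1b}--\eqref{eq:A2.1c} follow from \eqref{eq:A2.1a} by Abel partial summation.

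For \eqref{eq:A2.1a} I would invoke the Weil--Stechkin explicit formula for $\zeta_\K$ applied to the same non-negative even test function supported in $[-T,T]$ as in \cite{GrenieMolteni2}. Under GRH this produces, for $T\geq 5$,
\[
N_\K(T):=\#\{\rho=\tfrac12+i\gamma : |\gamma|\leq T\}
\leq \frac{T}{\pi}\,W_\K(T)-\frac{n_\K T}{\pi}+c_1 W_\K(T)+c_2 n_\K + c_3,
\]
where the $c_i$ are explicit quantities coming from the Mellin transform of the test function, the gamma-factor contribution and the prime-ideal side. The identifications $\pi c_1=1.4427$, $-\pi c_2=8.9250$, $\pi c_3=8.6542$ then give \eqref{eq:A2.1a}.

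For \eqref{eq:A2.1b} I would use the GRH identity $|\rho|^2=\tfrac14+\gamma^2$ and partial summation, writing
\[
\sum_{|\gamma|\geq T}\frac{1}{|\rho|^2}
=\int_{T^{-}}^{\infty}\frac{dN_\K(t)}{\tfrac14+t^2}
\leq \int_T^{\infty}\frac{2t\,N_\K(t)}{(\tfrac14+t^2)^2}\,dt,
\]
after discarding the non-positive boundary term at $t=T$. Plugging the upper bound \eqref{eq:A2.1a} for $N_\K(t)$ reduces the problem to three elementary integrals of shape $\int_T^\infty t^{-2}\log(t/2\pi)\,dt$, $\int_T^\infty t^{-2}\,dt$ and $\int_T^\infty t^{-3}\,dt$; evaluating them delivers the main terms $W_\K(T)/(\pi T)$ and $n_\K/(\pi T)$ together with the secondary $2.8854/T$, $18.6019/T$, $17.3084/T^2$ corrections.

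For \eqref{eq:A2.1c} I would split the sum at $|\gamma|=5$: the low-zero contribution $\sum_{|\gamma|\leq 5}\pi/|\rho|$ is bounded directly via Lemma~\ref{lem:A2.2} (multiplied by $\pi$), while the remaining sum $\sum_{5<|\gamma|\leq T}\pi/|\rho|$ becomes, after partial summation against $N_\K$, the integral $\int_5^{T}\pi t\,N_\K(t)(\tfrac14+t^2)^{-3/2}\,dt$ plus two boundary terms at $t=5,T$. The leading piece $\int_5^T t^{-1}\log(t/2\pi)\,dt=\tfrac12[\log^2(T/2\pi)-\log^2(5/2\pi)]$ then supplies both the $\log(T/2\pi)\log\disc$ and the $\tfrac12\log^2(T/2\pi)\,n_\K$ contributions, while the constants $3.9792$, $-1.4969$ and $25.5362$ collect the boundary value at $t=5$, the effect of the approximation $t(\tfrac14+t^2)^{-1/2}\approx 1$, and the $\pi$-scaled input from Lemma~\ref{lem:A2.2}. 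The sole obstacle is arithmetical bookkeeping: carrying enough decimal digits through each partial summation so that the accumulated secondary constants remain truthful to the announced two-extra-digit precision. No conceptual ingredient beyond what is already present in \cite{GrenieMolteni2} is required.
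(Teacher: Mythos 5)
Your overall plan is the same as the paper's, whose proof is essentially a citation: the three bounds are the \textbf{First sum}, \textbf{Second sum} and \textbf{Third sum} estimates from the body of \cite{GrenieMolteni2}, recomputed with more digits, with Lemma~\ref{lem:A2.2} used in place of Lemma~3.1 of \cite{GrenieMolteni2} inside the third one. Your attribution is slightly off on this point: Lemma~3.1 of \cite{GrenieMolteni2} is the low-zero bound $\sum_{|\gamma|\leq 5}1/|\rho|$ (the analogue of Lemma~\ref{lem:A2.2} here), not the three sums themselves; but your splitting of \eqref{eq:A2.1c} at $|\gamma|=5$ and the use of Lemma~\ref{lem:A2.2} for the low zeros is exactly the modification the paper makes.

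There is, however, a concrete flaw in your sketch of \eqref{eq:A2.1b}. Writing $N_\K(t)$ for the counting function, partial summation gives $\sum_{|\gamma|\geq T}|\rho|^{-2}=-N_\K(T)/(\tfrac14+T^2)+\int_T^{\infty}2t\,N_\K(t)(\tfrac14+t^2)^{-2}\,dt$, and since $N_\K(t)\sim\frac{t}{\pi}\big(W_\K(t)-\nK\big)$, the integral alone has main term about $\frac{2W_\K(T)}{\pi T}$, whereas the claimed bound has main term $\frac{W_\K(T)+\nK}{\pi T}$; it is precisely the negative boundary term $-N_\K(T)/(\tfrac14+T^2)\sim-\frac{W_\K(T)-\nK}{\pi T}$ that reconciles the two. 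Discarding that boundary term, as you propose, therefore produces a bound roughly twice too large in the $W_\K(T)$-aspect, and it cannot yield the constants of \eqref{eq:A2.1b} (the resulting bound already exceeds the stated one for large $T$ and fixed $\K$). To retain the boundary term you need a \emph{lower} bound for $N_\K(T)$ (equivalently, an upper bound for the number of zeros in $[T,t]$), which the one-sided estimate \eqref{eq:A2.1a} does not supply; this two-sided counting information is part of what is imported from \cite{GrenieMolteni2}. Your outlines of \eqref{eq:A2.1a} and \eqref{eq:A2.1c} are otherwise consistent with the source argument, modulo the numerical bookkeeping you acknowledge.
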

%
\begin{proof}
These are just \textbf{First sum}, \textbf{Second sum} and \textbf{Third sum},
in \cite{GrenieMolteni2} with more digits. To prove \eqref{eq:A2.1c}, we use
Lemma~\ref{lem:A2.2} instead of Lemma~3.1 of \cite{GrenieMolteni2}.
\end{proof}
\begin{lemma}\label{lem:A2.4}
Let
\[
f_1(x) := \sum_{r=1}^{\infty}\frac{x^{1-2r}}{2r(2r-1)},
\qquad
f_2(x) := \sum_{r=2}^{\infty}\frac{x^{2-2r}}{(2r-1)(2r-2)},
\]
\[
R_{r_1,r_2}(x) := - (r_1+r_2-1)(x\log x-x)
                  + r_2(\log x +1)
                  - (r_1+r_2)f_1(x)
                  - r_2      f_2(x).
\]
Let $x\geq 3$, then
\[
-(r_1+r_2-1)\log x \leq R_{r_1,r_2}'(x) \leq -\delta_{(r_1,r_2),(1,0)}\log(1-x^{-2}) - \delta_{(r_1,r_2),(0,1)}\log(1-x^{-1})
\]
where $\delta_{(r_1,r_2),(a,b)}$ is $1$ if and only if both indices are equal
and $0$ otherwise.
\end{lemma}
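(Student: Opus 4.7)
The plan is to derive a closed form for $R'_{r_1,r_2}(x)$ by differentiating the two series and then read off both bounds. Termwise differentiation gives
\[
f_1'(x) = -\sum_{r\geq 1}\frac{x^{-2r}}{2r} = \tfrac{1}{2}\log(1-x^{-2}),
\]
and $f_2'(x) = -\sum_{r\geq 2} x^{1-2r}/(2r-1)$, which is the odd-index tail of the Taylor series of $\mathrm{arctanh}(y)$ at $y=1/x$; a short manipulation yields $f_2'(x) = \frac{1}{x}+\tfrac{1}{2}\log\frac{x-1}{x+1}$. Substituting into the definition of $R'_{r_1,r_2}$, the term $r_2/x$ arising from differentiating $r_2(\log x+1)$ cancels against the $1/x$ inside $-r_2 f_2'$, and regrouping the remaining logarithms delivers the clean identity
\[
R'_{r_1,r_2}(x) = -(r_1+r_2-1)\log x - \tfrac{r_1}{2}\log(1-x^{-2}) - r_2\log(1-x^{-1}).
\]

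From this identity the lower bound is immediate: for $x>1$, both $-\log(1-x^{-2})$ and $-\log(1-x^{-1})$ are nonnegative, so $R'_{r_1,r_2}(x) \geq -(r_1+r_2-1)\log x$.

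For the upper bound I would argue case by case. If $(r_1,r_2)=(1,0)$ the identity reads $R'(x)=-\tfrac{1}{2}\log(1-x^{-2})$, which is at most $-\log(1-x^{-2})$ because the latter is nonnegative; if $(r_1,r_2)=(0,1)$ it gives exactly $R'(x)=-\log(1-x^{-1})$. In every remaining case $r_1+r_2\geq 2$, and rewriting the identity as
\[
R'_{r_1,r_2}(x) = \log x - \tfrac{r_1+2r_2}{2}\log(x-1) - \tfrac{r_1}{2}\log(x+1)
\]
reduces the desired inequality $R'(x)\leq 0$ to the algebraic statement $x\leq (x-1)^{(r_1+2r_2)/2}(x+1)^{r_1/2}$. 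Since $x-1\geq 2$ and $x+1\geq 4$ for $x\geq 3$, the right-hand side is non-decreasing in both $r_1$ and $r_2$, so it suffices to verify the three minimal cases $(r_1,r_2)\in\{(2,0),(1,1),(0,2)\}$, each an elementary check at $x=3$ together with a monotonicity argument in $x$.

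The main obstacle, as I see it, is the careful identification of $f_2'$: one must recognize the $r\geq 2$ truncation as an $\mathrm{arctanh}$ tail and match signs precisely so that the two $1/x$ contributions cancel. Once the closed form above is in hand, neither direction of the inequality requires anything more than routine bookkeeping.
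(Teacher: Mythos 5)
Your proof is correct: termwise differentiation of the two power series gives exactly the closed form $R'_{r_1,r_2}(x) = -(r_1+r_2-1)\log x - \tfrac{r_1}{2}\log(1-x^{-2}) - r_2\log(1-x^{-1})$, and your case analysis (equality/half-size in the cases $(0,1)$ and $(1,0)$, and the reduction of $R'\leq 0$ for $r_1+r_2\geq 2$ to the three minimal cases $(2,0),(1,1),(0,2)$, each elementary for $x\geq 3$) establishes both bounds; note only that the upper bound tacitly uses $r_1+r_2\geq 1$, which of course holds for a number field. The paper does not reprove this lemma but simply cites Lemma~2.2 of \cite{GrenieMolteni2}, and your self-contained computation is of essentially the same nature as that reference.
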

\begin{proof}
See Lemma~2.2 in \cite{GrenieMolteni2}.
\end{proof}
We now restate Theorem~1.1 of \cite{GrenieMolteni2}.
\begin{theorem}\label{th:B2.5}
For every $x\geq 3$, $T\geq 5$ and $0<\kappa\leq 2$ we have:
\begin{align*}
\Big|\psi_\K(x)-x+\sum_{|\gamma|<T}\frac{x^{\rho}}{\rho}\Big|
\leq& \frac{\sqrt{x}}{\pi}
               \Big[\frac{2}{\kappa}+\frac{\kappa}{2}
                  + \frac{1.4427\kappa^2+3\kappa+11.5416}{2\kappa T}
                  + \frac{0.5915\kappa+4.3282}{T^2}
               \Big]W_\K(T)                                              \\
&+\frac{\sqrt{x}}{\pi}
               \Big[\frac{2}{\kappa} - \frac{\kappa}{2}
                  + \frac{8.9250\kappa^2 + 3\kappa + 74.4076}{2\kappa T}
                  + \frac{1.7702\kappa + 27.9029}{T^2}
               \Big]\nK                                                  \\
&+ \frac{\kappa x}{2T}
 +\frac{\sqrt{x}}{\pi}
               \Big[\frac{(1.3774\kappa^2 + 11.0190)\pi}{\kappa T}
                  + \frac{(0.4133\kappa + 8.2643)\pi}{T^2}
               \Big]                                                     \\
&+ |r_\K|
 + \tilde{\epsilon}_\K(x,T)
\end{align*}
where
\[
\tilde{\epsilon}_\K(x,T):=
\left\{
\begin{array}{ll}
-\log(1-x^{-2})                                           & \text{if }(r_1,r_2)=(1,0) \\
-\log(1-x^{-1})                                           & \text{if }(r_1,r_2)=(0,1) \\
\max\big(0, d_\K\log x - 3.6133\nK\frac{\sqrt{x}}{T}\big) & \text{otherwise}.
\end{array}
\right.
\]
\end{theorem}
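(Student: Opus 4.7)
The plan is to replay the proof of Theorem~1.1 of \cite{GrenieMolteni2}, carrying the smoothing width $\kappa$ as a symbolic parameter rather than fixing it to a specific numerical value. That proof applies a truncated Perron formula to $-\zeta_\K'/\zeta_\K$ against a two-piece (trapezoidal) weight $w_\kappa$ of half-width $\kappa/(2T)$ on logarithmic scale, centred at $x$. Shifting the contour past $s=1$, past the non-trivial zeros $\rho$, and past $s=0$ picks up residues producing the three main pieces $x$, $\sum_\rho \hat{w}_\kappa(\rho)\,x^\rho/\rho$, and $r_\K + R_{r_1,r_2}(x)$, where $\hat{w}_\kappa$ is the Mellin transform of $w_\kappa$ and $R_{r_1,r_2}$ is the archimedean series of Lemma~\ref{lem:A2.4}.

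I would then bound four error sources in turn. First, the smoothing error $|\psi_\K(x)-\tilde\psi_\K(x)|\leq \kappa x/(2T)$ accounts directly for the named $\kappa x/(2T)$ term. Second, the archimedean part $R_{r_1,r_2}(x)$: Lemma~\ref{lem:A2.4}'s derivative bound, integrated over the smoothing interval of length $\kappa x/T$, produces $\tilde\epsilon_\K(x,T)$, where in the generic case the $d_\K\log x$ contribution is placed (via a $\max$) in competition with the $3.6133\,\nK\sqrt{x}/T$ piece already absorbed into the $\nK$ bracket. Third, the constant $r_\K$ is controlled by Lemma~\ref{lem:A2.1}. Fourth and most substantial, the mismatch between the smoothed zero sum $\sum_\rho \hat{w}_\kappa(\rho)\,x^\rho/\rho$ and the truncated sum $\sum_{|\gamma|<T} x^\rho/\rho$ appearing in the statement splits into a $|\gamma|\leq T$ piece, estimated via Lemma~\ref{lem:A2.3}(a) and~(c), and a $|\gamma|\geq T$ tail, estimated via the decay $|\hat{w}_\kappa(\rho)|\ll (T/\kappa|\gamma|)^2$ combined with Lemma~\ref{lem:A2.3}(b). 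The coefficients $2/\kappa+\kappa/2$ multiplying $W_\K(T)$ and $2/\kappa-\kappa/2$ multiplying $\nK$ emerge from the way the even and odd parts of $\hat{w}_\kappa$ pair respectively with $W_\K(T)$ and with the $\nK$-contribution to the zero-counting function; the remaining $1/T$ and $1/T^2$ corrections absorb the explicit sub-leading constants recorded in Lemma~\ref{lem:A2.3}.

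The hard part is purely bookkeeping: $\kappa$ has to be propagated symbolically through every estimate so that each coefficient finally appears as a low-degree polynomial in $\kappa$ divided by $\kappa T$ or by $T^2$, in exactly the form stated. As a consistency check, specialising $\kappa$ to the value chosen in \cite{GrenieMolteni2} should recover the bound proved there, up to the couple of extra digits afforded by invoking the sharper Lemmas~\ref{lem:A2.1}--\ref{lem:A2.3}. No new analytic idea is needed; the delicacy lies in making $\kappa$ appear in the right places and with the right multiplicity, so that the subsequent optimisation leading to Theorem~\ref{th:B1.1} becomes available.
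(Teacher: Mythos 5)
Your top-level plan --- replay the proof of Theorem~1.1 of \cite{GrenieMolteni2} with the smoothing width carried symbolically as $\kappa$ --- is exactly what the paper does (it takes $h=\pm\frac{\kappa x}{T}$ instead of $h=\pm\frac{2x}{T}$). But the mechanism you describe is not the one in that proof, and the two places where your description deviates are precisely the places where your sketch would fail. The argument does not use a trapezoidal weight centred at $x$ with a Mellin transform $\hat{w}_\kappa$; it uses the one-sided averages $\frac{1}{h}\big(\psi^{(1)}_\K(x+h)-\psi^{(1)}_\K(x)\big)$ with \emph{both} signs $h=\pm\frac{\kappa x}{T}$, together with the monotonicity of $\psi_\K$, to sandwich $\psi_\K(x)$ between two quantities that the explicit formula for $\psi^{(1)}_\K$ controls. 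Your step ``smoothing error $|\psi_\K(x)-\tilde\psi_\K(x)|\leq \frac{\kappa x}{2T}$'' is exactly what this device is designed to avoid: with a centred weight that inequality amounts to bounding the $\Lambda$-mass of a short interval around $x$ by its length, which is not available at this level of explicitness without invoking a prime-counting bound of the very type being proved. In the actual proof $\frac{\kappa x}{2T}$ is not a smoothing error at all; it is the shift $\frac{h}{2}$ in the main term $\frac{(x+h)^2-x^2}{2h}=x+\frac{h}{2}$, and no short-interval estimate is ever needed.

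The two-sided structure is also what generates $\tilde{\epsilon}_\K(x,T)$, which your sketch cannot reproduce. The $h<0$ bound inherits the lower estimate $-(r_1+r_2-1)\log x\leq R_{r_1,r_2}'$ of Lemma~\ref{lem:A2.4} (and the $h>0$ bound the $-\log(1-x^{-2})$, $-\log(1-x^{-1})$ terms for the two special signatures); the generic term $\max\big(0,d_\K\log x-3.6133\nK\frac{\sqrt{x}}{T}\big)$ arises by comparing the $h<0$ and $h>0$ bounds, and the constant $3.6133$ comes from showing that the difference of their zero-sum coefficients, $D_W(T)W_\K(T)+D_n(T)\nK+D_c(T)$, is at least $\frac{3.6133\pi}{T}\nK$ when $d_\K>0$, using $\log\disc\geq\frac{\nK}{2}\log 5$. ``Integrating the derivative bound over the smoothing interval'' gives neither this constant nor the case distinction. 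Finally, the coefficients $\frac{2}{\kappa}\pm\frac{\kappa}{2}$ do not come from even and odd parts of a weight: the $\frac{2}{\kappa}$ comes from the tail $|\gamma|\geq T$ via the factor $A\frac{x^{3/2}}{|h|}$ with $A\approx 2$ against \eqref{eq:A2.1b}, the $\frac{\kappa}{2}$ from the $|\gamma|<T$ mismatch via $|w_\rho|\leq\frac{1}{2}$ against \eqref{eq:A2.1a}, and the sign flip on $\nK$ simply reflects the opposite signs of the $\nK$ terms in those two lemma bounds. So while the parametrization idea is right, the proof as you describe it has a genuine gap at the smoothing step and misses the comparison-of-two-bounds argument behind $\tilde{\epsilon}_\K$.
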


\begin{proof}
The proof proceeds as for \cite[Theorem~1.1]{GrenieMolteni2}, but now we
choose $h=\pm\frac{\kappa x}{T}$ with $\kappa\in(0,2]$ instead of
$h=\pm\frac{2x}{T}$. We start from \cite[Inequality~(4.2)]{GrenieMolteni2}
which, given the small modification of Lemma~\ref{lem:A2.4} above, now reads:
\begin{multline*}
-(r_1+r_2-1)\log x\\
\leq
\frac{\psi^{(1)}_\K(x+h)-\psi^{(1)}_\K(x)}{h}
- \Big(
        x+\frac{h}{2}
        - \sum_{\rho}\frac{(x+h)^{\rho+1}-x^{\rho+1}}{h\rho(\rho+1)}
        - r_\K
  \Big)\\
\leq
-\delta_{(r_1,r_2),(1,0)}\log(1-x^{-2})-\delta_{(r_1,r_2),(0,1)}\log(1-x^{-1}).
\end{multline*}
As seen in \cite[Section 4]{GrenieMolteni2}, under GRH we have
\begin{align*}
\Big|\sum_{|\gamma|\geq T}\frac{(x+h)^{\rho+1}-x^{\rho+1}}{h\rho(\rho+1)}\Big|
\leq &
\sum_{|\gamma|\geq T}
 x^{\frac{3}{2}}\frac{\big(1+\frac{h}{x}\big)^{\frac{3}{2}}+1}{h|\rho(\rho+1)|}
\leq
A\frac{x^{\frac{3}{2}}}{h}\sum_{|\gamma|\geq T}\frac{1}{|\rho^2|},
\end{align*}
with $A:=1+\big(1+\frac{h}{x}\big)^{\frac{3}{2}}$ while
\[
\sum_{|\gamma|<T}\frac{(x+h)^{\rho+1}-x^{\rho+1}}{h\rho(\rho+1)}
=
\sum_{|\gamma|<T}\frac{x^\rho}{\rho}+
hx^{-1/2}\sum_{|\gamma|<T}w_\rho x^{i\gamma}
\]
with
\[
w_\rho := \frac{\big(1+\frac{h}{x}\big)^{\rho+1}-1-(\rho+1)\frac{h}{x}}
               {\rho(\rho+1)\big(\frac{h}{x}\big)^2}.
\]
For $h>0$, we take $h=\frac{\kappa x}{T}$ thus
\[
A  =  1+\Big(1+\frac{\kappa}{T}\Big)^{\frac{3}{2}}
 \leq 2+\frac{3\kappa}{2T}+\frac{3\kappa^2}{8T^2}.
\]
From Lemma~2.1 in~\cite{GrenieMolteni2} we know that $|w_\rho|\leq\frac{1}{2}$
hence by~\eqref{eq:A2.1b} and \eqref{eq:A2.1a}
\begin{align*}
\frac{\pi}{\sqrt{x}}
&
\Big|\sum_{\rho}\frac{(x+h)^{\rho+1}-x^{\rho+1}}{h\rho(\rho+1)}
     -\sum_{|\gamma|<T}\frac{x^\rho}{\rho}
\Big|\\
\leq&
  \Big[\frac{2}{\kappa}+\frac{3}{2T} +\frac{3\kappa}{8T^2}\Big]
                           \Big[
                            \Big(1 + \frac{2.8854}{T}\Big)\frac{W_\K(T)}{\pi T}
                          + \Big(1 + \frac{18.6019}{T}\Big)\frac{\nK}{\pi T}
                          + \frac{17.3084}{T}
                           \Big] \\
&  + \frac{\kappa}{2}      \Big[
                            \Big(1 + \frac{1.4427}{T}\Big)W_\K(T)
                          - \Big(1 - \frac{8.9250}{T}\Big)\nK
                          + \frac{8.6542}{T}
                           \Big]\\
=& \Big[\Big(\frac{2}{\kappa}+\frac{3}{2T} +\frac{3\kappa}{8T^2}\Big)\Big(1 + \frac{2.8854}{T}\Big) + \frac{\kappa}{2}\Big(1+\frac{1.4427}{T}\Big)
   \Big]W_\K(T)\\
 &+\Big[\Big(\frac{2}{\kappa}+\frac{3}{2T} +\frac{3\kappa}{8T^2}\Big)\Big(1 + \frac{18.6019}{T}\Big)
        - \frac{\kappa}{2}\Big(1 - \frac{8.9250}{T}\Big)
   \Big]\nK\\
 & + \Big(\frac{2}{\kappa}+\frac{3}{2T} +\frac{3\kappa}{8T^2}\Big)\frac{17.3084}{T}
   +4.3271\frac{\kappa}{T}.
\end{align*}
After some simplifications it becomes, for $T\geq 5$,
\begin{align}
\frac{\pi}{\sqrt{x}}
  \Big|\sum_{\rho}&\frac{(x+h)^{\rho+1}-x^{\rho+1}}{h\rho(\rho+1)}
       -\sum_{|\gamma|<T}\frac{x^\rho}{\rho}
  \Big|                                                        \label{eq:A2.2}\\
\leq&    \Big[\frac{2}{\kappa}+\frac{\kappa}{2}
        + \frac{1.4427\kappa^2+3\kappa+11.5416}{2\kappa T}
        + \frac{0.5915\kappa+4.3281}{T^2}
     \Big]W_\K(T)                                              \notag\\
&
    +\Big[\frac{2}{\kappa} - \frac{\kappa}{2}
        + \frac{8.9250\kappa^2 + 3\kappa + 74.4076}{2\kappa T}
        + \frac{1.7702\kappa + 27.9029}{T^2}
     \Big]\nK                                                  \notag\\
&
   + \frac{4.3271\kappa^2 + 34.6168}{\kappa T}
   + \frac{1.2982\kappa + 25.9626}{T^2},                       \notag
\end{align}
%
which is an analogous of \cite[Equation~(4.3)]{GrenieMolteni2}.

\noindent For $h<0$, we take $h=-\frac{\kappa x}{T}$, we then have $x+h>1$ if
$\kappa\leq 3$, $x\geq 3$ and $T\geq 5$. We slightly modify the bound for $A$ in
that case and take
\[
A  =  1+\Big(1-\frac{\kappa}{T}\Big)^{\frac{3}{2}}
 \leq 2-\frac{3\kappa}{2T}+\frac{\kappa^2}{2T^2}.
\]
We still have $|w_\rho|\leq \frac{1}{2}+\frac{\kappa}{6T}$ and Equation~(4.4)
of \cite{GrenieMolteni2} becomes for $T\geq 5$
\begin{align}
\frac{\pi}{\sqrt{x}}
  \Big|\sum_{\rho}&\frac{(x+h)^{\rho+1}-x^{\rho+1}}{h\rho(\rho+1)}
       -\sum_{|\gamma|<T}\frac{x^\rho}{\rho}
  \Big|                                                            \label{eq:A2.3}\\
\leq&
   \Big[\frac{2}{\kappa}+\frac{\kappa}{2}
        +\frac{\kappa^3/3 + 1.4427\kappa^2 -3\kappa + 11.5416}{2\kappa T}
        +\frac{0.2405\kappa^2 + 0.7886\kappa - 4.3281}{T^2}
   \Big]W_\K(T)                                                   \notag\\
&
 + \Big[\frac{2}{\kappa}-\frac{\kappa}{2}
       + \frac{-\kappa^3/3 + 8.9250\kappa^2 - 3\kappa + 74.4076}{2\kappa T}
       + \frac{1.4875\kappa^2 + 2.3602\kappa - 27.9028}{T^2}
     \Big]\nK                                                     \notag\\
&
   + \frac{4.3271\kappa^2 + 34.6168}{\kappa T}
   + \frac{1.4424\kappa^2+1.7309\kappa - 25.9626}{T^2}.           \notag
\end{align}
Let $M_{W,\pm}(T)$, $M_{n,\pm}(T)$ and $M_{c,\pm}(T)$ be the functions of $T$
such that the right hand side of \eqref{eq:A2.2} and \eqref{eq:A2.3} respectively\
are
\begin{align*}
&M_{W,+}(T)W_\K(T) + M_{n,+}(T) \nK + M_{c,+}(T), \\
&M_{W,-}(T)W_\K(T) + M_{n,-}(T) \nK + M_{c,-}(T),
\end{align*}
and their differences let be denoted as
\[
\let\ds=\displaystyle
\begin{array}{l@{}c@{}l@{}c@{}l}
D_W(T)&{}:={}& M_{W,+}(T)-M_{W,-}(T)
      &{} ={}& \ds\frac{18-\kappa^2}{6T}
          + \frac{8.6562-0.1971\kappa-0.2405\kappa^2}{T^2},                   \\[1ex]
D_n(T)&{}:={}& M_{n,+}(T)-M_{n,-}(T)
      &{} ={}& \ds\frac{18+\kappa^2}{6T}
          + \frac{55.8057 - 0.5900\kappa - 1.4875\kappa^2}{T^2},              \\[1ex]
%
D_c(T)&{}:={}& M_{c,+}(T)-M_{c,-}(T)
      &{} ={}& \ds\frac{51.9252 - 0.4327\kappa - 1.4424\kappa^2}{T^2}.
\end{array}
\]
%
We then have
\begin{multline*}
\Big|\psi_\K(x)-x+\sum_{|\gamma|<T}\frac{x^\rho}{\rho}\Big|
\leq \frac{\sqrt{x}}{\pi}\big(M_{W,+}(T)W_\K(T) + M_{n,+}(T) \nK + M_{c,+}(T)\big)       \\
     + \frac{\kappa x}{2T}
     + |r_\K|
     + \delta_{(r_1,r_2),(1,0)}\log(1-x^{-2})
     + \delta_{(r_1,r_2),(0,1)}\log(1-x^{-1}) \\
     + \max\Big(0,(r_1+r_2-1)\log x - \frac{\sqrt{x}}{\pi}\big(D_{W}(T)W_\K(T) + D_{n}(T) \nK + D_{c}(T)\big)\Big).
\end{multline*}
The claim follows if $d_\K=0$ since $D_W(T)$, $D_n(T)$ and $D_{c}(T) \geq 0$
($D_c$ and the coefficients of $\frac{1}{T^2}$ in $D_n$ and $D_W$ are positive
$\forall\kappa\in[-6,4]$). If $d_\K>0$ we have
$\frac{1}{\nK}\log\disc\geq \frac{1}{2}\log5$ thus
\begin{multline*}
D_{W}(T)\frac{W_\K(T)}{\nK} + D_{n}(T)
\geq                                                                         \\
\Big(\frac{18-\kappa^2}{6T} + \frac{8.6562-0.1971\kappa-0.2405\kappa^2}{T^2}\Big)
\log\Big(\frac{\sqrt{5}T}{2\pi}\Big)
+ \frac{18+\kappa^2}{6T} + \frac{55.8057 - 0.5900\kappa - 1.4874\kappa^2}{T^2}\\
\geq
\frac{3.6133\pi}{T}
\end{multline*}
when $T\geq 5$ and $0\leq\kappa\leq 2$.
\end{proof}

\section{Proof of Theorem~\ref{th:B1.1}}
By Equation~\eqref{eq:A2.1c} we have for $T\geq 5$
\begin{equation}\label{eq:A3.1}
\Big|\sum_{\substack{\rho\\|\gamma|<T}}\frac{x^\rho}{\rho}\Big|
\leq
\frac{\sqrt{x}}{\pi}\Big[
  \Big(\log\Big(\frac{T}{2\pi}\Big) +\alpha\Big)\log \disc
+ \Big(\frac{1}{2}\log^2\Big(\frac{T}{2\pi}\Big)+\beta\Big)\nK
+ \gamma
\Big]
\end{equation}
with $\alpha=3.9792$, $\beta=-1.4969$ and $\gamma=25.5362$.
Recalling the upper bound for $|r_\K|$ in Lemma~\ref{lem:A2.1}, from the result
in Theorem~\ref{th:B2.5} and~\eqref{eq:A3.1} we deduce that for $x\geq3$ and
$T\geq5$,
\begin{equation}\label{eq:A3.2}
|\psi_\K(x)-x|\leq
      \Big(\frac{\sqrt{x}}{\pi}F(T)+1.0155\Big)\log\disc
    + \Big(\frac{\sqrt{x}}{\pi}G(T)-2.1042\Big)\nK
    + H(x,T)
\end{equation}
with
\begin{equation}\label{eq:A3.3}
\begin{array}{r@{}>{{}}c<{{}}@{}l}
F(T)  &:=&\displaystyle
           \log\Big(\frac{T}{2\pi}\Big)
           + \frac{2}{\kappa}+\frac{\kappa}{2}
           + \frac{1.4427\kappa^2+3\kappa+11.5416}{2\kappa T}
           + \frac{0.5915\kappa+4.3282}{T^2}
           + \alpha,                                                \\[.3cm]
G(T)  &:=&\displaystyle
           \frac{1}{2}\log^2\Big(\frac{T}{2\pi}\Big)
           + \Big(\frac{2}{\kappa}+\frac{\kappa}{2}
                + \frac{1.4427\kappa^2+3\kappa+11.5416}{2\kappa T}
                + \frac{0.5915\kappa+4.3282}{T^2}
             \Big)\log\Big(\frac{T}{2\pi}\Big)                      \\[.3cm]
      &  &\displaystyle
           + \beta
           + \frac{2}{\kappa} - \frac{\kappa}{2}
           + \frac{8.9250\kappa^2 + 3\kappa + 74.4076}{2\kappa T}
           + \frac{1.7702\kappa + 27.9029}{T^2},                    \\[.3cm]
H(x,T)&:=&\displaystyle
           \frac{\kappa x}{2T}
           + \frac{\sqrt{x}}{\pi}
             \Big[
                     \gamma
                   + \frac{(1.3774\kappa^2 + 11.0190)\pi}{\kappa T}
                   + \frac{(0.4133\kappa + 8.2643)\pi}{T^2}
             \Big]
           + 8.3423
           + \epsilon_\K(x,T).
\end{array}
\end{equation}
%
We need to choose $T$ to get the lowest possible bound for $|\psi_\K(x)-x|$,
thus we choose the best $T$ by looking for an approximate zero of
\[
\frac{\partial}{\partial T}
    \Big(\frac{\sqrt{x}}{\pi}F(T)\log\disc
    + \frac{\sqrt{x}}{\pi}G(T)\nK
         + H(x,T)
    \Big)
\]
above $5$. Unfortunately we are not able to find $T$ as an explicit function
of $x$.\\
\noindent{}Both $T\mapsto F(T)$ and $T\mapsto G(T)$ have a unique minimum while
$T\mapsto H(x,T)$ is decreasing for any $x>0$. The main increasing terms are
$\frac{\sqrt{x}}{2\pi}\big[\log\big(\frac{T}{2\pi}\big)+\frac{\kappa}{2}+\frac{2}{\kappa}\big]^2\,\nK$
from $G(T)$, and
$\frac{\sqrt{x}}{\pi}\big[\log\big(\frac{T}{2\pi}\big)+\frac{\kappa}{2}+\frac{2}{\kappa}\big]\log\disc$
from $F(T)$, while the main decreasing term is $\frac{\kappa x}{2T}$ from
$H(x,T)$. The derivative of the sum of these three terms is zero for
$T\log\big(\frac{e^{\frac{2}{\kappa}+\frac{\kappa}{2}}\rdisc
T}{2\pi}\big)=\frac{\kappa\pi\sqrt{x}}{2\nK}$, we should thus choose
\[
T    =
T_W := \frac{2\pi}{\rdisc e^{\frac{2}{\kappa}+\frac{\kappa}{2}}}e^{W\Big(\frac{\kappa e^{\frac{2}{\kappa}+\frac{\kappa}{2}}\rdisc\sqrt{x}}{4\nK}\Big)}
     = \frac{\kappa\pi\sqrt{x}}{2\nK W\Big(\frac{\kappa e^{\frac{2}{\kappa}+\frac{\kappa}{2}}\rdisc\sqrt{x}}{4\nK}\Big)}.
\]
However, for $\rdisc\to\infty$ we have $T_W\to 0$. We thus slightly complicate
the expression we are trying to minimize: this will have the effect to give a
minimum that is both more precise and above $5$. The expressions contain the
parameter $\kappa$, which has to be fixed. To find a good value for $\kappa$,
we computed the asymptotic expansion of the result with optimal $T$ and
$\kappa$ unevaluated but independent of $x$. This is
\[
  \frac{\sqrt{x}}{2\pi}
    \Big(
        \frac{\log^2 x}{4}
      - \log x\log\log x
      + \Big(\log\rdisc+\log\Big(\frac{\kappa e^{\frac{2}{\kappa}+\frac{\kappa}{2}}}{2\pi\nK}\Big)+1\Big)\log x
      + o(\log x)
    \Big)\nK,
\]
so that the best value for $\kappa$ is the one minimizing
$\kappa e^{\frac{2}{\kappa} +\frac{\kappa}{2}}$, i.e. $\sqrt{5}-1$.
Thus, to ease a little bit the computations, we set $\kappa=\sqrt{5}-1$ right
now, and we retain the symbol $\kappa$ only in those terms which will
contribute to the main part of the result. Notice that
$\frac{2}{\kappa}+\frac{\kappa}{2}=\sqrt{5}$ and
$\frac{2}{\kappa}-\frac{\kappa}{2}=1$ and that~\eqref{eq:A3.2}
and~\eqref{eq:A3.3} give \begin{equation*}
\begin{array}{r@{}>{{}}c<{{}}@{}l}
F(T)  &\leq&\displaystyle
          \log\Big(\frac{T}{2\pi}\Big)
          + \frac{2}{\kappa}+\frac{\kappa}{2}
          + \frac{7.0604}{T}
          + \frac{5.0593}{T^2}
          + 3.9792,                                         \\[.3cm]
G(T)  &\leq&\displaystyle
          \frac{1}{2}\Big(\log\Big(\frac{T}{2\pi}\Big)
                         +\frac{2}{\kappa}+\frac{\kappa}{2}
                     \Big)^2
          + \Big(
            \frac{7.0604}{T}
          + \frac{5.0593}{T^2}
            \Big)\log\Big(\frac{T}{2\pi}\Big)               \\[.3cm]
       &   &\displaystyle
          - 2.9969
          + \frac{37.1145}{T}
          + \frac{30.0910}{T^2},                            \\[.3cm]
H(x,T)&\leq&\displaystyle
            \frac{\kappa x}{2T}
          + \frac{\sqrt{x}}{\pi}\Big(25.5362
                                   + \frac{33.3542}{T}
                                   + \frac{27.5673}{T^2}
                                \Big)
          + 8.3423
          + \epsilon_\K(x,T).
\end{array}
\end{equation*}
We have kept $\kappa$ in all terms which will contribute to the highest order
terms of the asymptotic expansion in $x$, in order to make explicit the role
of this parameter on the final quality of the result.\\
Since $\epsilon_\K$ is small with respect to most other parameters and not
differentiable, we remove it from the optimization process. Let then
\enlargethispage{-3\baselineskip}
\begin{align}
E_0(x,T):=& F(T)\log\rdisc + G(T)
          + \frac{\pi}{\nK\sqrt{x}}(H(x,T)-\epsilon_\K(x,T))  \notag\\
      \leq& \Big(
              \log\Big(\frac{T}{2\pi}\Big)
            + \frac{2}{\kappa}+\frac{\kappa}{2}
            + \frac{7.0604}{T}
            + \frac{5.0593}{T^2}
            + 3.9792
              \Big)\log\rdisc                                 \notag\\
          & + \frac{1}{2}\Big(\log\Big(\frac{T}{2\pi}\Big)
                           +\frac{2}{\kappa}+\frac{\kappa}{2}
                       \Big)^2
            + \Big(
              \frac{7.0604}{T}
            + \frac{5.0593}{T^2}
              \Big)\log\Big(\frac{T}{2\pi}\Big)               \notag\\
          & - 2.9969
            + \frac{37.1145}{T}
            + \frac{30.0910}{T^2}
            + \frac{\kappa\pi\sqrt{x}}{2\nK T}
            + \frac{1}{\nK}\Big(25.5362
                              + \frac{33.3542}{T}
                              + \frac{27.5673}{T^2}
                           \Big)
            + \frac{8.3423\pi}{\nK\sqrt{x}}                   \notag\\
      \leq& \frac{1}{2}\Big(\log\Big(\frac{T}{2\pi}\Big)
                           +\frac{2}{\kappa}+\frac{\kappa}{2}
                           + \log\rdisc
                       \Big)^2
            - \frac{1}{2}\log^2\rdisc                         \notag\\
          & + \Big(\frac{7.0604}{T}
                + \frac{5.0594}{T^2}
              \Big)
              \Big(\log\Big(\frac{T}{2\pi}\Big)
                  +\frac{2}{\kappa}+\frac{\kappa}{2}
                  + \log\rdisc
              \Big)                                           \notag\\
          & + \frac{\kappa\pi\sqrt{x}}{2\nK T}
            + \frac{21.3270}{T}
            + \frac{18.7781}{T^2}
            + \frac{1}{\nK}\Big(\frac{33.3542}{T}
                              + \frac{27.5673}{T^2}
                           \Big)                              \notag\\
          & + 3.9792\log\rdisc
            - 2.9969
            + \frac{25.5362}{\nK}
            + \frac{8.3423\pi}{\nK\sqrt{x}}                   \notag\\
        =:& E(x,T).                                           \label{eq:A3.4}
\end{align}
It is obvious that $\lim_{T\to\infty} E(x,T)=\infty$. We have
\begin{align*}
\frac{\partial E(x,T)}{\partial T}
=& \Big(\frac{1}{T}
      - \frac{7.0604}{T^2}
      - \frac{10.1186}{T^3}
   \Big)
   \Big(\log\Big(\frac{T}{2\pi}\Big)
       +\frac{2}{\kappa}+\frac{\kappa}{2}
       + \log\rdisc
   \Big)                                                      \\
 & - \Big[\frac{14.2666}{T^2}
        + \frac{32.4969}{T^3}
        + \frac{1}{\nK}\Big(\frac{33.3542}{T^2}
                          + \frac{55.1346}{T^3}
                       \Big)
        + \frac{\kappa\pi\sqrt{x}}{2\nK T^2}
   \Big].
\end{align*}
Let $T_F=8.282137\dots$ be the positive root of
$T^2 - 7.0604T
     -  10.1186
$
(which is where the estimate of $F$ reaches its minimum). We obviously have
$\frac{\partial E(x,T)}{\partial T}|_{T=T_F}<0$ and
$\frac{\partial E(x,T)}{\partial T}=0$ when
\begin{multline}\label{eq:A3.5}
   \Big(T
      - 7.0604
      - \frac{10.1186}{T}
   \Big)
   \Big(\log\Big(\frac{T}{2\pi}\Big)
       +\frac{2}{\kappa}+\frac{\kappa}{2}
       + \log\rdisc
   \Big)                                                      \\
= \frac{\kappa\pi\sqrt{x}}{2\nK}
+ 14.2666
+ \frac{32.4969}{T}
+ \frac{1}{\nK}\Big(33.3542
                    + \frac{55.1346}{T}
               \Big).
\end{multline}
The left hand side of this equation is increasing for $T>T_F$ and maps
$[T_F,{+\infty})$ onto $[0,{+\infty})$ while the right hand side is decreasing
for $T>0$ thus the equation has a single solution for $T>T_F$. Thus for given
$\K$ and $x$, $E(x,T)$ has a single local minimum for some $T>T_F$ and this
minimum is reached for the unique $T>T_F$ satisfying~\eqref{eq:A3.5}. The
solutions (in $T$) of~\eqref{eq:A3.5} can unfortunately not be expressed
with standard analytic functions. We thus slightly modify~\eqref{eq:A3.5} to
have a solution with a nice expression in terms of the Lambert-$W$ function.
We will discuss in Remark~\ref{rk:A1} below the effect of the change we
made to the equation.

\noindent
Suppose we have found a $T_0$ satisfying
\begin{equation}\label{eq:A3.6}
   \Big(T_0
      - 7.0604
      - \frac{10.1186}{T_0}
   \Big)
   \Big(\log\Big(\frac{T_0}{2\pi}\Big)
       +\frac{2}{\kappa}+\frac{\kappa}{2}
       + \log\rdisc
   \Big)                                                      \\
= \frac{\kappa\pi\sqrt{x}}{2\nK}
+ 21.3270
+ \frac{33.5251}{\nK}.
\end{equation}
Denote
\[
\lamb := \log\Big(\frac{T_0}{2\pi}\Big)
         +\frac{2}{\kappa}+\frac{\kappa}{2}
         + \log\rdisc
\]
so that
\[
\frac{\kappa\pi\sqrt{x}}{2T_0\nK}
+ \frac{21.3270}{T_0}
+ \frac{33.3542}{T_0\nK}
                      = \Big(1
                            - \frac{7.0604}{T_0}
                            - \frac{10.1186}{T_0^2}
                        \Big)\lamb.
\]
Then
\begin{align}
E(x,T_0)
           =& \frac{1}{2}\lamb^2
              + \lamb
              - \frac{1}{2}\log^2\rdisc
              + \frac{18.7781
                    + \frac{27.5673}{\nK}
                    - 5.0593w}{T_0^2}                   \label{eq:A3.7}\\
            & + 3.9792\log\rdisc
              - 2.9969
              + \frac{25.5362}{\nK}
              + \frac{8.3423\pi}{\nK\sqrt{x}}           \notag
\intertext{and so, according to Lemma~\ref{lem:A3.1}, see below, it is}
        \leq& \frac{1}{2}(\lamb+1)^2
              - \frac{1}{2}\log^2\rdisc
              + 3.9792\log\rdisc
              - 3.4969
              + \frac{25.5362}{\nK}
              + \frac{8.8590\pi}{\nK\sqrt{x}}           \notag\\
            =& \frac{1}{2}\Big(\log\Big(\rdisc e^{\frac{2}{\kappa} + \frac{\kappa}{2}}\frac{T_0}{2\pi}\Big)
                            + 1
                         \Big)^2
              - \frac{1}{2}\log^2\rdisc                 \label{eq:A3.8}\\
            & + 3.9792\log\rdisc
              - 3.4969
              + \frac{25.5362}{\nK}
              + \frac{8.8590\pi}{\nK\sqrt{x}}.          \notag
\end{align}
To have an upper-bound for $E(x,T_0)$, we can substitute $T_0$
in~\eqref{eq:A3.8} by anything greater than $T_0$. We define $T_W$ and
redefine $\lamb$ by
\begin{align*}
a     &:= \frac{\kappa\pi\sqrt{x}}{2\nK}
      + 21.3270
      + \frac{33.3542}{\nK},                                                \\
\lamb &:= W\Big(\frac{e^{\frac2{\kappa}+\frac{\kappa}2}}{2\pi}\rdisc a\Big),\\
T_W   &:= \frac{2\pi e^{\lamb}}{\rdisc e^{\frac{2}{\kappa}+\frac{\kappa}{2}}}
     = \frac{a}{\lamb},
\end{align*}
which means that $T_W$ is the solution of the equation
\begin{equation}\label{eq:A3.9}
   T
   \Big(\log\Big(\frac{T}{2\pi}\Big)
       +\frac{2}{\kappa}+\frac{\kappa}{2}
       + \log\rdisc
   \Big)
= \frac{\kappa\pi\sqrt{x}}{2\nK}
+ 21.3270
+ \frac{33.3542}{\nK}.
\end{equation}
Recalling the constant $T_F$ defined above, $T_F+T_W$ is larger than $T_0$.
Indeed, if we replace $T_0$ by $T_F+T_W$ in~\eqref{eq:A3.6}, the first factor
is bigger than $T_W$ while the second is bigger than the one in~\eqref{eq:A3.9}
so that the left hand side of~\eqref{eq:A3.6} is bigger than its right hand
side; since the left hand side is increasing this proves that $T_F+T_W\geq
T_0$. We now replace $T_0$ by $T_F+T_W$ in~\eqref{eq:A3.8} obtaining
\begin{align*}
E_0(x,T_0)\leq&\frac{1}{2}\Big(\log\Big(\rdisc e^{\frac{2}{\kappa}+\frac{\kappa}{2}}\frac{T_W+T_F}{2\pi}\Big)+1\Big)^2
                + 3.9792\log\rdisc
                - 3.4969
                + \frac{25.5362}{\nK}
                + \frac{8.8590\pi}{\nK\sqrt{x}}          \\
          \leq&\frac{1}{2}\log^2\Big(e^{\lamb+1}+33.5251\rdisc\Big)
                + 3.9792\log\rdisc
                - 3.4969
                + \frac{25.5362}{\nK}
                + \frac{8.8590\pi}{\nK\sqrt{x}}
\end{align*}
which is exactly the first claim in Theorem~\ref{th:B1.1}.
\medskip

We now proceed for the second inequality~\eqref{eq:A1.2}. We fix a value for
$T$, postponing to Remark~\ref{rk:A2} the reason for this choice. The
minimal value for $F$ is reached when $(\kappa,T)\simeq(2.141,7.2773)$ and the
actual value is $\leq 2.2367\pi$. We make a slightly different choice, which is
$\kappa=2$ and $T=10$, which increases slightly the coefficient of
$\sqrt{x}\log\disc$ but decreases the coefficient of $\sqrt{x}\,\nK$ and $x$,
and makes the formula slightly nicer. We then have
\begin{align*}
F(T) &\leq 2.2543\pi \\
G(T) &\leq 0.9722\pi
\end{align*}
which gives
\begin{align*}
|\psi_\K(x)-x|
\leq& (2.2543\sqrt{x}+1.0155)\log\disc + (0.9722\sqrt{x}-2.1042)\nK \\
&   + \frac{x}{10}
    + 9.0458\sqrt{x}
    + 7.0320
    + \epsilon_\K(x,10)
\end{align*}
proving the second claim in Theorem~\ref{th:B1.1}.

\begin{remark}\label{rk:A1}
We discuss some choices we made for the first bound. Let us call $T_{\min}$ the
zero of~\eqref{eq:A3.5} above $T_F$. When we define $T_0$ from~\eqref{eq:A3.6}
we obviously have $T_0\neq T_{\min}$. However the difference between the
functions appearing on the right hand side of \eqref{eq:A3.6} and
\eqref{eq:A3.5} is
\[
7.0604 - \frac{32.4969}{T} - \frac{55.1346}{\nK T}.
\]
This means that, to obtain $T_{\min}$, we should remove from the right hand
side of the equation defining $T_0$ a quantity that is asymptotic to $7.0604$.
Hence, to the first order for $x\to{+\infty}$,
$T_0-T_{\min}\sim \frac{7.0604}{\log T_0}$. Thus
$\log T_0-\log T_{\min}\sim \frac{7.0604}{T_0\log T_0}$, and
$\frac{1}{2}\log^2 T_0-\frac{1}{2}\log^2 T_{\min}\sim \frac{7.0604}{T_0}$.
This difference produces a term of order
$\frac{\nK\sqrt{x}}{T_0}\asymp \nK^2\log x$ which is already much smaller
than the main terms of the upper bound. On the other hand, when we substitute
\[
\frac{\kappa\pi\sqrt{x}}{2\nK T_{\min}}+21.3270+\frac{33.3542}{\nK}
\]
in~\eqref{eq:A3.4} to obtain the equivalent of~\eqref{eq:A3.7}, a term
$\frac{7.0604}{T_{\min}}\sim \frac{7.0604}{T_0}$
would remain. This term cancels out the previous one, so that the final effect
of the replacement of $T_{\min}$ by $T_0$ is even smaller than $\nK^2\log x$.

\noindent{}The situation where $\rdisc\to{+\infty}$ with $x$ and $\nK$ fixed
is slightly different. In this case $T_{\min}$ and $T_0$ both tend to $T_F$ so
that the result is changed by a term of the order of $\frac{1}{\log\rdisc}$.
However~\eqref{eq:A1.1} is not very good anyway because in several steps we
dropped terms in $-\frac{1}{T^2}$, including a term in $-\frac{\lamb}{T^2}$,
which now do not tend to $0$.
\end{remark}
\begin{remark}\label{rk:A2}
One should keep in mind that~\eqref{eq:A1.2} is thought for fixed $x$ and
diverging $\rdisc$. To prove it, we chose $T=10$. We could have used a
technique similar to the one we used for~\eqref{eq:A1.1} optimizing
$T$ in terms of $\rdisc$. However this is not worth it, because we would
obtain a bound which differs from~\eqref{eq:A1.2} by
$(-c_1+o(1))\frac{\sqrt{x}}{\log\rdisc}$. Meanwhile, all the
approximations and choices we made affect the coefficient of
$\sqrt{x}\log\rdisc$ in~\eqref{eq:A1.2}, thus to improve the bound it is more
efficient, for instance, to increase the minimal value above which
Theorem~\ref{th:B2.5} is valid or to refine the bounds in Lemma~\ref{lem:A2.3}.
\end{remark}

\begin{lemma}\label{lem:A3.1}
In the settings of the proof of Theorem~\ref{th:B1.1},
\[
  R := \frac{18.7781+\frac{27.5673}{\nK}- 5.0593\lamb}{T_0^2}
\leq
     \frac{0.5167\pi}{\nK\sqrt{x}}.
\]
\end{lemma}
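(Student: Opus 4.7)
The argument splits into two cases according to the sign of the numerator $N := 18.7781 + 27.5673/\nK - 5.0593\,\lamb$ of $R$.

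\emph{Case 1 (trivial).} One has $N \leq 0$ exactly when $\lamb \geq (18.7781 + 27.5673/\nK)/5.0593 = 3.7115 + 5.4488/\nK$; in that case $R \leq 0 \leq 0.5167\pi/(\nK\sqrt{x})$ and there is nothing to prove.

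\emph{Case 2 (main).} Assume $\lamb < 3.7115 + 5.4488/\nK$, which forces $\lamb < 9.1603$ for every $\nK\geq 1$ and makes $N>0$ with $N\leq 18.7781+27.5673/\nK$. Rewriting \eqref{eq:A3.6} as $T_0 = 7.0604 + 10.1186/T_0 + a/\lamb$ with $a:=(\sqrt{5}-1)\pi\sqrt{x}/(2\nK) + 21.3270 + 33.3542/\nK$, one gets immediately $T_0 \geq 7.0604 + a/\lamb$. Feeding this back into the identity $T_0^2 = 7.0604\,T_0 + 10.1186 + a T_0/\lamb$ (obtained by multiplying the rewritten form of \eqref{eq:A3.6} by $T_0$) yields
\[
T_0^2 \;\geq\; 59.97 + 14.12\,\frac{a}{\lamb} + \frac{a^2}{\lamb^2}.
\]
The desired inequality is equivalent to $N\,\nK\sqrt{x} \leq 0.5167\pi\,T_0^2$, so combining the above lower bound for $T_0^2$ with the explicit form of $a$ reduces the task to an algebraic inequality in the three variables $\lamb\in[\sqrt{5},\,3.7115+5.4488/\nK)$, $\nK\geq 1$ and $x\geq 3$.

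\emph{Expected main obstacle.} The naive bounds $\lamb \leq 3.7115+5.4488/\nK$ in the numerator and $a \geq (\sqrt{5}-1)\pi\sqrt{x}/(2\nK)$ in $T_0^2$, applied separately, are individually too loose: they are tight in opposite regimes of $x$. For small $x$ the constant part of $a$ and the quadratic term $a^2/\lamb^2$ are essential, while for large $x$ the $\sqrt{x}$-piece of $a$ and the cross term $14.12\,a/\lamb$ carry the argument. The clean way forward is to exploit in addition the constraint $\lamb \geq \sqrt{5}+\log(T_0/2\pi)$ (coming from $\rdisc\geq 1$), which in Case~2 forces the complementary upper bound $T_0 \leq 2\pi e^{(3.7115+5.4488/\nK)-\sqrt{5}}$; using \eqref{eq:A3.6} to eliminate $\lamb$ then reduces the verification to a single-parameter estimate that is elementary but requires careful bookkeeping of the cross terms in $a^2$.
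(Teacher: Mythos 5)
Your Case 1 and the algebraic manipulations of \eqref{eq:A3.6} in Case 2 (the identity $T_0^2=7.0604\,T_0+10.1186+aT_0/\lamb$ and the ensuing lower bound for $T_0^2$) are correct, but the main case is never actually carried out: it ends with a description of an intended strategy, and that strategy cannot succeed as stated. The only arithmetic input you propose to add is $\rdisc\geq 1$, i.e. $\lamb\geq\sqrt5+\log(T_0/2\pi)$. The constant $0.5167$ is essentially sharp: it is (up to the grid resolution of the verification) the supremum of $\frac{\nK\sqrt{x}}{\pi}R$, attained for $\nK=3$ with the minimal cubic discriminant $\disc=23$ near $x\approx 4000$, where $R$ and $\frac{0.5167\pi}{\nK\sqrt{x}}$ agree to three significant figures. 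If the constraint is relaxed to $\rdisc=1$ with $\nK=3$, the quantity $\frac{3\sqrt{x}}{\pi}R$ exceeds $0.7$: taking for instance $T_0=30$, so $\lamb=\log\big(\frac{30}{2\pi}\big)+\sqrt5\approx 3.799$, equation \eqref{eq:A3.6} gives $a\approx 85.9$, hence $\sqrt{x}\approx 82.6$ (so $x\geq3$ is respected), the numerator is $N\approx 8.74$, and $\frac{3\sqrt{x}}{\pi}\cdot\frac{N}{T_0^2}\approx 0.77>0.5167$. Thus any argument using only $x\geq3$, $\nK\geq1$ and $\rdisc\geq1$ is attempting to prove a statement that is false on that relaxed domain; the lemma genuinely requires lower bounds for the root discriminant in each degree (minimal discriminants from the number field tables for small $\nK$, Odlyzko's bounds for larger $\nK$), which your outline never invokes. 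This, together with the fact that no verification of the resulting ``algebraic inequality in three variables'' is actually performed, is the gap.

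For comparison, the paper's proof is structured around exactly the information you are missing: via the implicit function theorem it shows that $\lamb$ is increasing in $x$ and in $L=\sqrt5+\log\rdisc$, hence $R$ is decreasing in $x$, and that for $\log\rdisc\geq\frac12\log 3$ the derivative $\partial R/\partial L$ is negative, so for each degree the worst case is the field of minimal $\disc$; the bound is then checked numerically on an increasing sequence $(x_n)$, the monotonicity in $x$ controlling $R$ between consecutive grid points, with all degrees $\nK\geq 9$ handled by one uniform computation. Given how tight the constant is, some finite computation of this type is unavoidable; if you wish to keep your analytic reduction, you would at least have to carry the true per-degree values of $\rdisc$ through the estimate, and you would still face an essentially sharp two-variable verification rather than an elementary one.
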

\begin{proof}
Letting $L:=\frac{2}{\kappa}+\frac{\kappa}{2}+\log\rdisc$ and $S:=
\frac{\kappa\pi\sqrt{x}}{2\nK}
+ 21.3270
+ \frac{33.3542}{\nK}$,
we rewrite~\eqref{eq:A3.6} as
\[
  f(T_0,L)=S.
\]
Using the implicit function theorem, it is then easy to see that
\[
  \frac{\partial\lamb}{\partial L}=
  \frac{\big(T_0+\frac{10.1186}{T_0}\big)
        \big(\log\big(\frac{T_0}{2\pi}\big)+L\big)
       }
       {\big(T_0+\frac{10.1186}{T_0}\big)
        \big(\log\big(\frac{T_0}{2\pi}\big)+L\big)
      +
        \big(T_0
           - 7.0604
           - \frac{10.1186}{T_0}
        \big)
       }
  \geq 0
\]
and that
$\frac{\partial T_0}{\partial x}\geq 0$ and thus $\lamb$ is increasing
with both $L$ and $x$. It is thus obvious that $R$ is decreasing with $x$. We
now prove that, if $\nK\geq 2$ and $x$ is fixed, the maximum value of $R$ is
obtained for the minimal discriminant. Indeed we observe that
\[
\frac{\partial R}{\partial L}
=
\frac{2\big(T_0-7.0604-\frac{10.1186}{T_0}\big)
  \big(18.7781
  + \frac{27.5673}{\nK}\big)
  - 5.0593\lamb(3T_0-2\cdot7.0604-\frac{10.1186}{T_0})
}
       {T_0^2\big(\big(T_0+\frac{10.1186}{T_0}\big)
        \big(\log\big(\frac{T_0}{2\pi}\big)+L\big)
      +
        \big(T_0
           - 7.0604
           - \frac{10.1186}{T_0}
        \big)\big)
       }
\]
which is negative if
\[
\log\rdisc\geq
\frac{2\big(T_0-7.0604-\frac{10.1186}{T_0}\big)
  \big(18.7781
  + \frac{27.5673}{\nK}\big)
}{5.0593\big(3T_0-2\cdot7.0604-\frac{10.1186}{T_0}\big)
       }-\log\Big(\frac{T_0}{2\pi}\Big)-\sqrt{5}.
\]
This is true because as a function of $T_0$ the right hand side has a maximum
value equal to $0.1366\ldots$ (attained for $\nK=2$ and $T_0\approx 21.2153$)
while $\log\rdisc\geq\frac{1}{2}\log 3$.

\noindent{}For each degree, we thus just need to bound $R$ for the
field with minimal absolute discriminant. For the first few $\nK$ we determine
the lowest possible value for $\disc$ using the ``megrez'' number field
tables~\cite{MegrezTables} and for $\nK\geq 8$ we use Odlyzko's Table 3
in~\cite{OdlyzkoTables}.

For any increasing sequence $(x_n)$ let
$c_n:=\frac{\nK\sqrt{x_{n+1}}}{\pi}R(x_n)$. Since $R$ is decreasing in $x$ and
$(x_n)$ is increasing, if $c_{\max}:=\max c_n$, we have
\[
\forall x,\quad R(x)\leq \frac{c_{\max}\pi}{\nK\sqrt{x}}.
\]
We use the sequence $(x_n)$ defined as follows:
\[
x_1     := 3, \qquad
x_{n+1} := x_n + \begin{cases}
                   1 & \text{if $x_n \in [3,5000)$}    \\
                  10 & \text{if $x_n \in [5000,10^4)$} \\
                 100 & \text{if $x_n \in [10^4,10^5)$} \\
                 x_n & \text{otherwise.}
                 \end{cases}
\]
The table below shows the values of $c_{\max}$ for each degree, the
point $x_{n_{\max}}$ where it is reached and the total number of points we
compute (we stop as soon as $R(x_n)<0)$.
For $\nK\geq 9$, we used the general formula for $\nK=9$ with $S=21.3270$ and
$T_0=T_F$, which ensures that the result is valid for all $\nK\geq 9$.
\[
\begin{array}[b]{rlrrr}
   \nK & \multicolumn{1}{l}{c_{\max}} & \multicolumn{1}{l}{x_{n_{\max}}} & n_{\text{points}} \\
\hline
     1 & 0.2110 & 2810 & 6411 \\
     2 & 0.4644 & 4350 & 6402 \\
     3 & 0.5167 & 3986 & 6398 \\
     4 & 0.4443 & 2927 & 5809 \\
     5 & 0.1325 &  694 & 4177 \\
     6 & 0.0144 &   63 &  280 \\
     7 & <0     &    3 &    1 \\
     8 & <0     &    3 &    1 \\
\geq 9 & <0     &    3 &    1
\end{array}\qedhere
\]
\end{proof}


\end{document}